\documentclass[reqno]{amsart}     
\usepackage{amsmath,amssymb}    
\usepackage{mathrsfs}    
\usepackage{color} 
\usepackage[backref,colorlinks]{hyperref}  
\usepackage[english,algoruled,vlined,linesnumbered,norelsize]{algorithm2e}

\newtheorem{theorem}                   {Theorem} 
 
\newtheorem{lemma}           [theorem] {Lemma}  
   
\newtheorem{corollary}       [theorem] {Corollary}   
\newtheorem{cor}             [theorem] {Corollary}

\theoremstyle{remark}
\newtheorem{remark}          [theorem] {Remark}

\newcommand{\eps}{\varepsilon}
\newcommand{\cH}{\mathcal{H}}
\newcommand{\cC}{\mathcal{C}}
\newcommand{\DFS}{\mathrm{DFS}}
\newcommand{\EE}{\mathbb{E}}

\newcommand{\Nat}{\mathbb{N}}
\newcommand{\Bi}{\mathrm{Bi}}

\newcommand{\Ecomp}{E^{(1)}{(t)}}
\newcommand{\dstart}{d_L^{(1)}(t)}
\newcommand{\Dstart}{D_\ell^{(1)}(t)}
\newcommand{\djump}{d_L^{(2)}(t)}
\newcommand{\Djump}{D_\ell^{(2)}(t)}
\newcommand{\Ejump}{E^{(2)}{(t)}}
\newcommand{\dbranch}{d_L^{(3)}(t)}
\newcommand{\Dbranch}{D_\ell^{(3)}(t)}
\newcommand{\Ebranch}{E^{(3)}(t)}

\newcommand{\dtotal}{d_L(G_j(t))}
\newcommand{\Dtotal}{\Delta_{\ell}(G_j(t))}
\newcommand{\Etotal}{E{(t)}}

\newcommand{\Dfinaltotal}{\Delta_{\ell}(G_j(\alpha n^k))}

\newcommand{\goodjumps}[2]{A^{(2)}_{#1}(#2)}
\newcommand{\allgoodjumps}{A^{(2)}}
\newcommand{\goodbranchings}[2]{A^{(3)}_{#1}(#2)}
\newcommand{\allgoodbranchings}{A^{(3)}}
\newcommand{\goodstarts}[1]{A^{(1)}(#1)}
\newcommand{\allgoodstarts}{A^{(1)}}

\newcommand{\allgoodevents}{A^{(\mbox{\rm all})}}

\newcommand{\brconst}{C^\dagger}

\begin{document}
\title[Largest components in random hypergraphs]{Largest components in random hypergraphs}
\thanks{The first and third authors were supported by short visit grants~5639 and~5472 respectively from 
the European Science Foundation (ESF) within the 
``Random Geometry of Large Interacting Systems and Statistical Physics'' (RGLIS) program.\\
The second author is supported by Austrian Science Fund (FWF): P26826, W1230, Doctoral Program ``Discrete Mathematics''.}

\author[O.~Cooley]{Oliver Cooley}
\address{Institute of Optimization and Discrete Mathematics, Graz University of Technology, Steyrergasse 30, 8010 Graz, Austria}
\email{cooley@math.tugraz.at}
\author[M.~Kang]{Mihyun Kang}
\address{Institute of Optimization and Discrete Mathematics, Graz University of Technology, Steyrergasse 30, 8010 Graz, Austria}
\email{kang@math.tugraz.at}
\author[Y.~Person]{Yury Person}
\address{Goethe-Universit\"at, Institute of Mathematics, Robert-Mayer-Str. 10, 60325 Frankfurt, Germany}
\email{person@math.uni-frankfurt.de}

\date{\today}

\begin{abstract}
In this paper we consider $j$-tuple-connected components in random $k$-uniform hypergraphs (the $j$-tuple-connectedness relation can be defined by letting two $j$-sets be connected if they lie in a common edge and consider the transitive closure; the case $j=1$ corresponds to the common notion of vertex-connectedness). We determine that the existence of a $j$-tuple-connected component containing $\Theta (n^j)$ $j$-sets in random $k$-uniform hypergraphs undergoes a phase transition and show that the threshold occurs at edge probability $\tfrac{(k-j)!}{\binom{k}{j}-1}n^{j-k}$. Our proof extends the recent short proof for the graph case by Krivelevich and Sudakov which makes use of a depth-first search to reveal the edges of a random graph.

Our main original contribution is a \emph{bounded degree lemma}, which controls the structure of the component grown in the search process.

\vspace{0.5cm}

\keywords{largest component, phase transition, random hypergraphs, degree, branching process}
\noindent \textbf{Keywords}: Largest component, phase transition, random hypergraphs, degree, branching process.\\
\textbf{Mathematics Subject Classification}: 05C65, 05C80.

\end{abstract}

\maketitle
\section{Introduction}

\subsection{Phase transition in random graphs}
The Erd\H{o}s-R\'enyi random graph~\cite{ErdRen60} $G(n,p)$(resp.\ $G(n,M)$) 
is one of the most intensely studied in the theory of random graphs. 
It is well-known and has been studied in great detail (see e.g.~\cite{Bolbook,JLRbook}) 
how the structure of the components changes as $p$ (resp.\ $M$) grows.
In the seminal paper~\cite{ErdRen60} entitled ``On the evolution of random graphs'' Erd{\H o}s and  R\'enyi discovered among other things that the Erd{\H o}s-R\'enyi random graph undergoes a drastic change of the size and structure of 
largest components, which happens when the number of edges is around $n/2$. In terms of the binomial model $G(n,p)$, this phenomenon can be stated as follows. Consider $G(n,p)$ with $p=c/n$ for a constant $c>0$. If $c<1$, then 
asymptotically almost surely (a.a.s.\ for short, meaning with probability tending to one as $n$ tends to $\infty$) all the components in $G(n,p)$ have $O(\log n)$ vertices, whereas if $c>1$, then a.a.s.\ there is a unique component with $\rho n+ o(n)$ vertices, the so-called giant component, where $\rho$ is the unique positive solution of the  equation
\begin{equation*}
1-\rho=\exp(-c\rho).
\end{equation*} 
In 1984, Bollob\'as~\cite{Bol84} made a breakthrough in the study of the so-called critical phenomenon associated with the phase transition 
by studying the case that $c\to1$ in more detail. His result was improved by {\L}uczak in 1990~\cite{Luczak90}.  Let $\lambda$ be such that 
\begin{equation}
p = \frac 1n + \frac\lambda{n^{4/3}}.\label{eq:lambda}
\end{equation}
If $\lambda\to-\infty$,  then a.a.s.\ all the components have order $o(n^{2/3})$. If $\lambda\to +\infty$, then there is a.a.s.\ a unique component of order $\gg n^{2/3}$, while all other components have order $o(n^{2/3})$. If $\lambda$ is a constant, then the size of the largest component is $\Theta (n^{2/3})$.

\subsection{Phase transition in random hypergraphs} 
A $k$-uniform hypergraph $H$ is the tuple $(V,E)$, where $V$ is the vertex set of $H$ 
and $E$ is its edge set with $E\subseteq \tbinom{V}{k}$. The random $k$-uniform hypergraph 
$H^k(n,p)$ is defined similarly to $G(n,p)$: each of the $\tbinom{n}{k}$ 
possible edges is included independently of the others with probability $p$.  

Similar phase transition phenomena were discovered in random hypergraphs.  In particular, a straightforward generalisation of the giant component was studied in~\cite{SPS85,KarLuc02,BCOK10,BCOK13}, where the following concept of ``component'' was studied: 
two vertices $u$ and $v$ are connected in a $k$-uniform hypergraph if there is a sequence 
of edges $h_0$,\ldots, $h_\ell$ such that $u\in h_0$ and $v\in h_\ell$ 
and $h_i\cap h_{i+1}\neq\emptyset$.

The threshold for  $H^k(n,p)$ was first determined by Schmidt-Pruzan and Shamir in~\cite{SPS85}. 
More precisely, let $p=c/\binom{n-1}{k-1}$. If $c<(k-1)^{-1}-\eps$ for an arbitrarily small but fixed $\epsilon>0$, then a.a.s.\  the number of vertices of the largest component is $O(\log n)$. But, if $c>(k-1)^{-1}+\eps$, then a.a.s.\ there is a unique component containing a linear number of vertices,
which is called the \emph{giant component}; more precisely, the number of vertices of the giant component is $\rho n+o(n)$, where $\rho$ is the unique positive solution to the equation
\begin{equation*}
1-\rho=\exp(c((1-\rho)^{k-1}-1)).
\end{equation*}
Karo\'nski and \L uczak~\cite{KarLuc02} studied the phase transition in the \emph{early supercritical phase}, when
$\tbinom{n}kp=\frac{n}{k(k-1)}+o(n^{2/3}(\log n/\log\log n)^{1/3})$, proving a local limit theorem for the number
of vertices in the largest component. Ravelomanana and Rijamamy~\cite{RR06} extended the range to
$\tbinom{n}kp=\frac{n}{k(k-1)}+o(n^{7/9})$, although only giving the expected size of the largest component and
not its distribution. Behrisch, Coja-Oghlan and Kang~\cite{BCOK10} established central and local limit theorems for
the number of vertices in the largest component of $H^k(n,p)$  with edge probability $p(k-1)\tbinom{n-1}{k-1}>1+\epsilon$
for an arbitrarily small but fixed $\epsilon>0$. Bollob\'as and Riordan~\cite{BR12} subsequently proved that the distribution
of the number of vertices in the largest component tends to a normal distribution whenever $\eps = \omega (n^{-1/3})$.

While in the graph case two vertices are connected if there is a path (or walk) between them, in hypergraphs the notion of a path (or walk) is ambiguous and in fact there are several possible definitions. 
An \emph{$s$-tight path} of length $m$ in a $k$-uniform hypergraph $H$ is a sequence $e_0$, \ldots, $e_{m-1}$ such that $e_i=\{v_{i(k-s)+1},\ldots v_{i(k-s)+k}\}$ for some distinct vertices $v_j$. In the case $s=1$ we call an $s$-tight path \emph{loose}, and for $s=k-1$ simply \emph{tight} path. 

Note that when $p=\tfrac{(k-2)!}{n^{k-1}}$,  the edges in $H^k(n,p)$ typically intersect in at most one vertex, thus, ``morally'' if two vertices are connected then they are connected by loose paths (mostly). The result of Schmidt-Pruzan and Shamir in~\cite{SPS85} (mentioned above) can be restated as follows.
\begin{theorem}[\cite{SPS85}]\label{thm:vtx_conn}
 Let $k\ge 2$ and $\eps>0$ be given. Then a.a.s.\ the (vertex) size of the giant component in the random 
 $k$-uniform hypergraph $H^k(n,p)$ is $\Omega(\eps n)$ if $p=(1+\eps)\tfrac{(k-2)!}{n^{k-1}}$ and 
 $O(\tfrac{\log n}{\eps^2})$ if $p=(1-\eps)\tfrac{(k-2)!}{n^{k-1}}$. 
\end{theorem}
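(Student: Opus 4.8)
The plan is to adapt the Krivelevich--Sudakov depth-first search (DFS) argument to $H^k(n,p)$. I would run a DFS on the vertex set, keeping at all times a partition $V = S \cup T \cup U$ into fully explored vertices $S$, an active stack $T$ spanning a connected sub-hypergraph of the revealed edges, and untouched vertices $U$. To process the vertex $v$ on top of $T$, run through the $(k-1)$-subsets $W$ of $U$ in a fixed order, asking each time whether $\{v\}\cup W$ is an edge of $H^k(n,p)$; at the first positive answer move the $k-1$ vertices of $W$ from $U$ onto $T$, and if all answers are negative move $v$ from $T$ into $S$; when $T$ empties, move an arbitrary vertex of $U$ onto $T$ and open a new component. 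As for graphs, no potential edge is ever queried twice, so the sequence of answers consists of independent Bernoulli($p$) variables; and since every positive answer contributes precisely $k-1$ new vertices, after $d$ vertices have been uncovered in $c$ components there have been exactly $(d-c)/(k-1)$ positive answers. By a Chernoff bound and a union bound, a.a.s.\ every window of the answer sequence of length $m \ge m_0 := C' n^{k-1}\log n$, for a suitable $C' = C'(k,\eps)$, contains $(1\pm\tfrac{\eps}{4})pm$ positive answers; I would use this repeatedly.

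For the supercritical statement I would follow the growth of $S\cup T$. Fix a small $\eta = \eta(k,\eps)$ and run the search until $|S|$ first reaches $\eta n$. If $|T|$ has already reached $\tfrac{\eps\eta}{3}n$ at that point we are done; otherwise $|T| < \tfrac{\eps\eta}{3}n$ throughout, so every vertex popped so far was popped while at least $(1-\eta-o(1))n$ vertices were still untouched, and hence queried at least $\binom{(1-\eta-o(1))n}{k-1}$ subsets. Thus the number $m$ of queries made is at least $|S|\binom{(1-\eta-o(1))n}{k-1}$, while the number of positive answers is $(|S|+|T|-c)/(k-1)$. Since $(k-1)p\binom{(1-\eta)n}{k-1} = (1+\eps)(1-\eta)^{k-1}(1+o(1))$, plugging these into the concentration bound and choosing $\eta$ small gives $|S|+|T|-c \ge (1+\tfrac{\eps}{3})|S|$, i.e.\ $|T| \ge \tfrac{\eps}{3}|S| = \tfrac{\eps\eta}{3}n$ — contradicting $|T| < \tfrac{\eps\eta}{3}n$. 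So a.a.s.\ the stack reaches linear size; as it spans a connected sub-hypergraph of $H^k(n,p)$, this exhibits a component on $\Omega(n)$ vertices (which is all the theorem asks once $\eps$ is a fixed constant).

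For the subcritical statement I would argue component by component. Each component $C$ uncovered by the search is uncovered during a single window of the answer sequence (the stack empties only when a component has been finished), and during this window there are exactly $(|C|-1)/(k-1)$ positive answers. On the other hand every vertex of $C$ is eventually popped, so the window has length at least $|C|\binom{|U'|}{k-1}$ where $U'$ is the untouched set when $C$ is finished; hence, while at least half of $V$ is still untouched, windows of components of size $\Omega(\log n/\eps^2)$ have length $\ge m_0$. For such a $C$ the concentration bound forces $(|C|-1)/(k-1) \le (1+\tfrac{\eps}{4})\,p\,|C|\binom{n}{k-1}$, i.e.\ $|C| - 1 \le (1+\tfrac{\eps}{4})(1-\eps)(1+o(1))|C|$, so $|C| \le 2/\eps$ — a contradiction. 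Running the search only until half the vertices are uncovered handles every component met in that phase; the edges among the still-untouched vertices have never been queried, so that sub-hypergraph is distributed as $H^k(n',p)$ with $n'\le n/2$, which is even more strongly subcritical, and iterating this over $O(\log n)$ phases shows that a.a.s.\ \emph{every} component uncovered by the search has $O(\log n/\eps^2)$ vertices.

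The step I expect to be the real obstacle — and the only point where the argument genuinely goes beyond transcribing Krivelevich--Sudakov — is that the search only ever queries edges meeting $S\cup T$ in a single vertex: edges with two or more already-uncovered vertices are never seen, so the component grown by the search can be a proper subset of the true component of $H^k(n,p)$ containing it. For the supercritical lower bound this under-counting is harmless, but for the subcritical upper bound it is exactly what must be controlled, since a priori many of the $O(\log n/\eps^2)$-sized grown components could be fused by such ``overlap'' edges into one large true component. This is the role of the \emph{bounded degree lemma}: one must show that a.a.s., throughout the search, the revealed hypergraph stays essentially tree-like — each vertex lies in only boundedly many edges, and only a negligible number of edges produce an overlap — so that each grown component and the true component containing it differ by at most a bounded factor, and in particular the true components inherit the bounds above. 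Making this precise with a union bound valid simultaneously over all of the up to $\Theta(n/\log n)$ small components is the technical heart of the paper.
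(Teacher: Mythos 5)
Your supercritical argument is sound and is essentially the paper's (the paper's Theorem~\ref{thm:vertex_one} runs the contradiction over a fixed time window $[\alpha n^k/2,\alpha n^k]$ rather than stopping when $|S|$ first reaches $\eta n$, but these are standard equivalent variants of the Krivelevich--Sudakov argument, and your concentration window $m_0=\Theta(n^{k-1}\log n)$ is comfortably exceeded). The subcritical half is where you diverge from the paper, and there you have a genuine gap which you half-diagnose but then resolve incorrectly.

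You run the \emph{restricted} search in both regimes: from the active vertex $v$ you only query $\{v\}\cup W$ with $W\subseteq U$. That is the paper's Algorithm~2. For the subcritical bound the paper instead uses the \emph{unrestricted} Algorithm~1, which from $v$ queries any unqueried edge $e\in\cH$ with $v\in e$ containing at least one neutral vertex (the other vertices may be active). Under Algorithm~1 a while-loop terminates precisely when a true vertex-component of $H^k(n,p)$ has been fully explored, each vertex issues at most $\Delta=\binom{n-1}{k-1}$ queries, and each positive answer still adds at most $k-1$ active vertices; so the inequality ``at least $(|C|-1)/(k-1)$ positive answers inside a window of length at most $|C|\Delta$'' applies directly to the \emph{true} components, and Lemma~\ref{lem:zero} (your Chernoff-plus-union-over-windows, with the constant made explicit) gives the $O(\eps^{-2}\log n)$ bound with no further work. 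Under your restricted process, by contrast, an edge with two or more already-discovered vertices (say $k-1$ vertices in one grown component and one in another) is never queried, so as you yourself observe, what you bound is the size of the grown components, not of the true ones, and the theorem does not yet follow.

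Your proposed repair --- invoking the bounded degree lemma to show that each grown component and the true component containing it ``differ by at most a bounded factor'' --- neither matches the paper nor works as stated. Lemma~\ref{lem:maxdeg} is a \emph{supercritical} tool: it bounds $\Delta_\ell(G_j(\alpha n^k))$ for $1\le\ell\le j-1$, is used solely to control how many queries the restricted search is allowed to skip when $p=(1+\eps)p_{k,j}$ and $j\ge2$, and is vacuous for $j=1$ (there are no $\ell$ with $1\le\ell\le j-1$). It says nothing about edges joining distinct grown components, and indeed once $\Theta(n)$ vertices have been discovered the expected number of edges meeting the discovered set in two or more vertices is already $\Theta(n)$, so a uniform ``bounded-factor'' comparison between grown and true components is not available. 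The correct and much simpler fix is the one the paper takes: for the subcritical bound, use the full DFS so that no edge incident to the growing component is ever hidden, and the bounded degree lemma is not needed at all for Theorem~\ref{thm:vtx_conn}.
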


In this paper we study the following notion of $j$-tuple-connectivity in $k$-uniform hypergraphs, which 
generalises the notion mentioned above (if $j=1$ we simply speak about vertex-connectivity). 
We say that two $j$-sets (tuples) $J_0$ and $J_n$ ($j\in\{1,\ldots,k-1\}$) 
are \emph{$j$-tuple-connected} in the $k$-uniform hypergraph $H$ if there is an alternating sequence of 
$j$ and $k$-element subsets of $V(H)\colon$ 
$J_0,h_0,J_1,h_1,\ldots, J_n$ such that  $J_i\cup J_{i+1} \subseteq h_i$ and $h_i \in E(H)$. 
The components  then consist of   $j$-element subsets of the vertex set of $H$.  
Again, one might wonder when a $j$-tuple-connected giant component of size (i.e.\ number of $j$-sets) $\Theta(n^j)$ emerges in the random $k$-uniform hypergraph. 

For the rest of the paper we will regard $k$ and $j$ as fixed constants. In particular, this means that any parameter which is a function only of $k$ and $j$ is also a fixed constant.

\subsection{Intuition: where to locate the thresholds?}
The intuition (as in the case of random graphs) comes from 
the branching processes which can be  described for general 
case of $j$-tuple-connectivity as follows. Initially we start with 
a single $j$-element set $J_0$. We expect that there are 
$m:=p\tbinom{n-j}{k-j}\approx pn^{k-j}/(k-j)!$ edges $e_1$,
\ldots, $e_m$ containing $J_0$ in
$H^k(n,p)$. The range of $p$ is typically such that these edges 
intersect pairwise only in $J_0$, which leads to $\tbinom{k}{j}-1$ 
offspring for each edge $e_i$. From the theory of branching processes, 
the process survives with positive probability for indefinite time if
$\left(\tbinom{k}{j}-1\right)m>1$
(if everything is independent and binomially distributed). This suggests that the threshold should 
be 
\begin{equation}\label{eq:threshold}
p_{k,j}=p_{k,j}(n):=\frac{(k-j)!}{\binom{k}{j}-1}n^{j-k}.
\end{equation}
For $j=1$ we obtain $p=\tfrac{(k-2)!}{n^{k-1}}$, which is exactly the threshold, 
see~\cite{ErdRen60, SPS85}. For $j=k-1$ we obtain the conjectured threshold for 
the  $(k-1)$-tuple-connectivity to be $\tfrac{1}{(k-1)n}$. 
Our main theorem shows that $p_{k,j}$ is the correct threshold for all $k,j$, as was suggested recently by 
Bollob\'as and Riordan in~\cite{BR12}.

Our approach builds on the recent proof strategy of Krivelevich and Sudakov~\cite{KrivSud13} who used the 
depth-first search algorithm in graphs  to give a simple and short proof of the phase transition in $G(n,p)$.
More precisely, we first adapt their proof strategy for $j=1$, thus deriving an alternative proof of Theorem~\ref{thm:vtx_conn}.
Moreover, the approach via depth-first search allows us to study the largest component in the supercritical phase, i.e.\ 
when $p=(1+\eps)\tfrac{(k-2)!}{n^{k-1}}$ with $\eps=\eps(n)\gg n^{-1/3}$, which gives the lower bound $\Omega(\eps n)$ for the size of the largest component in random hypergraphs. 
This range of $\eps$ matches that considered by Bollob\'as and Riordan in~\cite{BR12} and is essentially best possible.

Then we turn to the case of general $k,j$, which requires some additional work, most notably Lemma~\ref{lem:maxdeg}.
We obtain the following theorem thus confirming the threshold for $p_{k,j}$ 
mentioned above. By $\omega(f(n))$ we denote any function $g(n)$ such that $g(n)/f(n)\to\infty$ as $n\to\infty$.
\begin{theorem}\label{thm:pair_conn}
 Let $\eps= \eps(n)>0 $ and $1\leq j \leq k-1$ be given. Then a.a.s.\ the size of the $j$-tuple-connected largest component in the 
 random $k$-uniform hypergraph $H^k(n,p)$ is $O(\eps^{-2}\log n)$ if $p=(1-\eps)p_{k,j}$. 

Let $\delta\in(0,1)$ be any constant. If furthermore $\eps = \omega(n^{\delta-1}+n^{-j/3})$, 
then a.a.s.\ the largest $j$-tuple-connected component in $H^k(n,p)$ has size $\Omega(\eps n^j)$ if $p=(1+\eps)p_{k,j}$.
\end{theorem}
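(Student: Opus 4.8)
\medskip
\noindent\emph{Proof strategy.} The plan is to transfer the depth-first search (DFS) argument of Krivelevich and Sudakov to the level of $j$-sets. One runs a DFS exploring the $j$-tuple-connected components of $H^k(n,p)$, partitioning the $\binom{n}{j}$ $j$-sets into a set $\mathcal{U}$ of untouched $j$-sets, a stack $\mathcal{S}$ whose members all lie in the component currently being explored, and a set $\mathcal{T}$ of completed $j$-sets. While $\mathcal{S}$ is nonempty its top element $J$ is active, and one queries, one at a time in a fixed order, the potential edges $h\supseteq J$ not yet queried, pushing onto $\mathcal{S}$ the $j$-sets of $h$ still in $\mathcal{U}$ whenever $h$ turns out to be present; $J$ is popped into $\mathcal{T}$ once all $\binom{n-j}{k-j}$ potential edges through it have been queried, and when $\mathcal{S}$ empties one restarts from any $j$-set of $\mathcal{U}$. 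Exactly as in the graph case the sequence of query answers is i.i.d.\ $\mathrm{Bernoulli}(p)$, so after $Q$ queries the number $R$ of revealed edges is sharply concentrated around $Qp$; and, writing $m:=p\binom{n-j}{k-j}=\tfrac{1\pm\eps}{\binom{k}{j}-1}(1+o(1))$ (the plus sign in the supercritical, the minus sign in the subcritical regime), each revealed edge would contribute precisely $\binom{k}{j}-1$ new $j$-sets to $\mathcal{S}$ were it not for overlaps with the already-discovered part. Bounding these overlaps is exactly what Lemma~\ref{lem:maxdeg} is needed for: a uniform control on the degrees, hence the local density, of the grown sub-hypergraph forces the exploration to behave, up to lower-order corrections, like the idealised Galton--Watson process of mean offspring $m(\binom{k}{j}-1)=1\pm\eps+o(1)$.

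For the subcritical bound ($p=(1-\eps)p_{k,j}$) I argue as Krivelevich and Sudakov. If some component had more than $N:=C\eps^{-2}\log n$ $j$-sets, then during its (contiguous) exploration the DFS makes at most $N\binom{n-j}{k-j}$ queries, of which at least $(N-1)/(\binom{k}{j}-1)$ must be positive, since that many pushes are needed to place the $N$ $j$-sets on the stack. But a window of at most $N\binom{n-j}{k-j}$ consecutive $\mathrm{Bernoulli}(p)$ trials has expected number of successes at most $Nm=\tfrac{(1-\eps)N}{\binom{k}{j}-1}(1+o(1))$, so a Chernoff bound gives probability $\exp(-\Omega(\eps^{2}N))$ of seeing $(N-1)/(\binom{k}{j}-1)$ successes; a union bound over the at most $n^{k}$ possible starting positions then finishes this direction once $C$ is large, since $\eps^{2}N=C\log n$ (the statement being vacuous if $\eps^{-2}\log n>n^{j}$, as then $N$ exceeds the total number of $j$-sets).

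For the supercritical bound ($p=(1+\eps)p_{k,j}$) I would analyse the exploration process from each fresh start. As long as only a small constant multiple of $\eps n^{j}$ $j$-sets have been discovered in total, Lemma~\ref{lem:maxdeg} ensures that the discovered structure is sufficiently spread out that a newly revealed edge loses only an $O(\eps)$-fraction of its $j$-sets to overlaps; hence an active $j$-set has a number of offspring that stochastically dominates a variable of mean at least $1+c_{0}\eps$ for an absolute constant $c_{0}>0$, and the stack size behaves like a random walk with positive drift of order $\eps$ and bounded step variance. Such a walk, started at $1$, ``escapes'' (rises to height $\omega(1)$ and thereafter grows linearly) with probability $\Omega(\eps)$, and an escaped exploration a.a.s.\ goes on to discover $\Omega(\eps n^{j})$ $j$-sets, all lying in one component. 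If, for contradiction, every component had fewer than this many $j$-sets, then no exploration would escape; but a non-escaping exploration dies within $O(\eps^{-1})$ steps and so discovers only $O(\eps^{-1})$ $j$-sets, so one may run $\Omega(\eps^{2}n^{j})$ fresh explorations before the reservoir of undiscovered $j$-sets shrinks enough to destroy the drift, and the probability that all of them fail to escape is at most $(1-\Omega(\eps))^{\Omega(\eps^{2}n^{j})}=\exp(-\Omega(\eps^{3}n^{j}))=o(1)$ precisely because $\eps=\omega(n^{-j/3})$. The remaining hypothesis $\eps=\omega(n^{\delta-1})$ makes the grown structure polynomially large, so that the a.a.s.\ conclusion of Lemma~\ref{lem:maxdeg} and the concentration estimates above hold uniformly along the process.

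The main obstacle is Lemma~\ref{lem:maxdeg} itself: one must show that the relevant degrees of the \emph{randomly grown} sub-hypergraph stay under control throughout, despite the strong dependence between which edges get queried and the history of the search. The natural route is to fix the $j$-set (or lower-order set) whose degree is in question, observe that each potential edge through it is queried at most once and, conditionally on the past, receives a positive answer with probability $p$, so that the number of revealed edges through it is stochastically dominated by a binomial variable with the ``right'' mean; one then estimates the probability that this exceeds the target bound and takes a union bound over the at most $n^{j}$ (or $n^{j-1}$) choices. The subtlety is that such an edge may be queried ``from the other side'', by a different active $j$-set contained in it, so the bound has to be formulated to hold simultaneously at \emph{every} step of the process, which calls for a stopping-time or martingale argument; the resulting estimate must then be fed back into the overlap bounds used above, and this interplay is where most of the technical work lies.
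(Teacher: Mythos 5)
Your subcritical argument matches the paper's (Theorem~\ref{thm:zero_a}/Corollary~\ref{cor:zero}): if a component of size $N$ existed, its contiguous exploration window of at most $N\binom{n-j}{k-j}$ queries would need at least $(N-1)/(\binom{k}{j}-1)$ successes, which a Chernoff bound plus a union bound over intervals rules out. That part is fine and essentially verbatim.

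For the supercritical part your route diverges substantially from the paper's. You propose the classical random-walk-with-drift / escape-probability analysis: each fresh start spawns a near-supercritical branching process, escapes with probability $\Omega(\eps)$, escapees grow to $\Omega(\eps n^j)$, non-escapees die in $O(\eps^{-1})$ steps, and a union over $\Omega(\eps^2 n^j)$ trials gives failure probability $\exp(-\Omega(\eps^3 n^j))$. This could in principle be made to work, but it is exactly the style of argument that Krivelevich and Sudakov (and this paper, following them) deliberately \emph{avoid}. The paper instead argues by contradiction: if the stack of active $j$-sets were empty at any time $t\in[\alpha n^k/2,\alpha n^k]$, then every discovered $j$-set would already have had essentially all $\binom{n-j}{k-j}$ of its available edges queried; combined with the concentration of $\sum_i X_i$ around $pt$ and the bounded degree lemma to control overlap losses, this forces strictly more than $t$ queries to have been made by time $t$, a contradiction. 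This double-counting argument is shorter and avoids all of the loose ends in your sketch, namely (i) that "dies within $O(\eps^{-1})$ steps" is only an expectation, not a deterministic truncation, (ii) that "an escaped exploration a.a.s.\ goes on to discover $\Omega(\eps n^j)$ $j$-sets" is a separate claim that needs its own drift/concentration argument, and (iii) that the fresh starts are not independent of each other and the reservoir-depletion handwave needs a stopping-time formalisation.

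The more serious gap is your treatment of Lemma~\ref{lem:maxdeg}, which the paper identifies as its main new contribution. Your sketch -- "fix the set $L$, dominate the number of edges through $L$ by a binomial, union bound" -- is circular in the way you yourself half-acknowledge: the \emph{number of queries made through $L$} is itself controlled by how many $j$-sets containing $L$ have been discovered, which is exactly the degree you are trying to bound. The paper resolves this with a genuinely non-trivial structure: it decomposes the ways the $\ell$-degree of a fixed $L$ can grow into three explicit mechanisms (new starts at $L$, jumps to $L$ from a $j$-set not containing $L$, and branchings at $L$ from an active $j$-set containing $L$), proves a separate high-probability bound for each (Lemmas~\ref{lem:goodstarts},~\ref{lem:goodjumps},~\ref{lem:goodbranchings}, the last via a coupling to a subcritical cluster branching process), and then closes the loop with a \emph{deterministic induction on $t$} showing that these good events force $\Delta_\ell(G_j(t))\le C_\ell\alpha n^{j-\ell}$ for all $t\le\alpha n^k$ simultaneously. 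Your proposed "stopping-time or martingale argument" gestures at the same difficulty but gives no mechanism for untangling the interdependence between $\ell$-degrees for different $\ell$; without the explicit inductive decomposition, there is no obvious way to make the domination-by-binomial step non-circular, and this is where the actual technical content of the theorem lies.
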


Note in particular that Theorem~\ref{thm:vtx_conn} is an immediate corollary, although we will prove Theorem~\ref{thm:vtx_conn} first, since the proof of the special case is substantially simpler.

Note also that there is no lower bound on the size of $\eps$ in the first part of the theorem. However, for very small $\eps$, the bound on the largest component is not best possible, and may even be greater than $n^j$, and therefore useless as a bound. We discuss the critical window in more detail in Section~\ref{sec:conclusion}.

While preparing this paper we discovered that independently Lu and Peng~\cite{LP14} have claimed to have a proof of a similar result, although only for constant $\eps$.

Our main contribution to the proof of Theorem~\ref{thm:pair_conn} is Lemma~\ref{lem:maxdeg}, which will be formally stated in Section~\ref{sec:post-pt}. Briefly, it states that for some $\alpha \in (0,1)$ which is a function of $\eps$, with high probability the set of $j$-sets which have been discovered by time $\alpha n^k$ is ``smooth'' in the following sense: for any $1\le \ell \le j-1$, any $\ell$-set is contained in $O(\alpha n^{j-\ell})$ such $j$-sets. (This is best possible up to a constant factor.) 

In the proof of Theorem~\ref{thm:pair_conn} (and the special case $j=1$, which is Theorem~\ref{thm:vertex_one}), for the case $p=(1+\eps)p_{k,j}$ we will implicitly assume that $\eps$ is less than some small constant, say $\eps_0$, which is dependent on $k,j$. This is permissible since if $\eps > \eps_0$, then our aim is prove that there is a component of size $\Omega (\eps n^j) =\Omega (\eps_0 n^j) = \Omega (n^j)$, thus the result for $\eps>\eps_0$ is implied by the result for $\eps = \eps_0$. 

\subsection{Motivation from random simplicial complexes}
A parallel development was initiated by Linial and Meshulam who studied homological connectivity of 
 random simplicial complexes~\cite{LinMes06}. Furthermore, 
motivated by finding thresholds for various algebraic notions of cycles in $H^k(n,p)$, 
the questions such  as collapsibility and vanishing of the top homology have been investigated in~\cite{ALLM12,AL13,AL13b}. 
A $k$-uniform hypergraph $H$ is collapsible if one can delete from $H$ all of its edges 
one by one, such that in each step we remove some edge $e$ containing a $(k-1)$-element 
set $J$ if $e$  doesn't intersect any other edge in $J$. 
It has been shown in~\cite{ALLM12} that the first emerging cycle in the $(k-1)$th homology group of  $H^k(n,p)$  is 
either $K^{k}_{k+1}$ or contains $\Omega (n^{k-1})$ edges. 
Our  Theorem~\ref{thm:pair_conn} in the case $j=k-1$ may be seen as the study of the acyclic case of $H^k(n,p)$
 where we have a sharp threshold for the emergence of a tightly connected ``hypertree'' with $\Theta(n^{k-1})$ edges covering all vertices.

\section{Exploration of random hypergraphs via depth-first search}

Usually we denote the $n$-vertex set of a hypergraph by $[n]:=\{1,2,\ldots,n\}$. We also denote by $e(\cH)$ the order of the edge set $E(\cH)$.

\subsection{Exploration algorithm in hypergraphs}
Now we introduce the depth-first search algorithm (DFS) for hypergraphs. 
We are given as input two hypergraphs $H$ and $\cH$ on the same vertex set $V$ with $E(H)\subseteq E(\cH)$, 
and we would like to discover all edges of $H$ by \emph{querying} the edges of $\cH$ whether they belong to $H$. 
We will choose vertices and edges to query according to some linear orderings (see Algorithm~\ref{alg:DFS}).

There are three types of vertices: \emph{neutral}, \emph{active} and \emph{explored} (we borrow the terminology 
from~\cite{NPer10}). Additionally vertices that are active or explored (i.e. any non-neutral vertices) are called \emph{discovered}. Initially, all vertices are neutral. We start our exploration from the smallest neutral vertex, which we mark as active. Departing from some (currently considered) active vertex $v$ we try to 
discover the smallest edge $e$ of $\cH$ (we also say that we \emph{query} $e$) such that $e\in H$ and $v\in e$ containing at least one neutral vertex (but no explored vertices). 
Once such an edge is found we mark all neutral vertices in this edge active and start the same query process from the vertex which has been marked active last. If no edge $e$ could be found, we mark $v$ as explored and start the same querying process from the next active vertex. If no vertices are active then we have discovered some component completely, and we proceed as in the beginning of the depth-first search. Finally, once all vertices are discovered, we query all unqueried edges. Notice however that at the moment when all vertices are discovered, we know the vertex components of $H$.

Below is the complete description of the algorithm.
\begin{algorithm}[h]
  \caption{Hypergraph exploration $\DFS$}\label{alg:DFS}
    \KwIn{$\cH$, $H=(V,E)\subseteq \cH$ -- $k$-uniform hypergraphs.\linebreak $\sigma$ -- linear ordering of $E(\cH)$.\linebreak $\tau$ -- linear ordering of $V(\cH)$.}
    \KwOut{$\cC$ -- set of  vertex-connected components of $H$}
\BlankLine    
    $\cC:=\emptyset$\;
    let $S$ be empty stack\;
    \Repeat{all vertices are explored}{
    let $x$ be the smallest neutral vertex in the ordering $\tau$\;
    mark $x$ as active\;
    add $x$ to $S$\;
    \While{$S\neq\emptyset$}{
      Let $x$ be the top vertex of $S$\;
      \eIf{$\exists$ the smallest unqueried edge 
       $e$ of $\cH$ such that $x\in e$ and $e$ contains a neutral vertex}{ 
       \If{$e\in E(H)$ \bf{(``query $e$'')}  } {add all neutral vertices of $e$ in ascending order to top of $S$\; mark these vertices as active\;}}
     {remove $x$ from $S$\; mark $x$ as explored\;} 
      }
     let $C$ be the set of vertices explored in the while-loop above\;
     $\cC:=\cC\cup \{C\}$\;
    } 
    query all remaining edges of $\cH$ in the ascending order\;
\end{algorithm}

\subsection{Coupling}\label{sec:coupling}
 Let $H:=\cH_p$  denote the random subhypergraph of $\cH$ where every edge of $\cH$ is chosen independently 
of the other edges with probability $p$. Further let $(X_i)_{i\in[e(\cH)]}$ be 
a sequence of $e(\cH)$ i.i.d.\ Bernoulli random variables with mean $p$. 
We can associate with the $i$th query the random variable $X_i$, meaning that 
if $X_i=1$ then the queried edge is in $H$ and otherwise not. 
Once we have fixed the orderings of the vertices and edges and the values of the $X_i$, Algorithm~\ref{alg:DFS} is 
a deterministic one and it queries every edge of $\cH$ exactly once, thus every $\{0,1\}$-sequence
of length $e(\cH)$ corresponds to a unique subgraph of $\cH$. 

In this way, for given fixed orderings $\sigma$ and $\tau$ as in Algorithm~\ref{alg:DFS}, we couple the $X_i$'s with 
$\cH_p$. For technical reasons which are not needed in the case of vertex-connectivity we let the choice of $\tau$ be uniformly at random independently of $\sigma$.  In fact, we will only need that $\tau$ is chosen randomly in this way at one point in the paper (Lemma~\ref{lem:goodstarts}) --  an arbitrary ordering $\tau$ would work almost as well, but would lead to some additional technical difficulties in the range when $\eps$ is very small.

\begin{remark} Note that the ordering $\tau$ is only used when starting a new component to determine which vertex we will continue exploring -- the rest of the algorithm is independent of $\tau$. It is easy to see that this is equivalent to choosing a neutral vertex uniformly at random from which to continue exploring. It is this interpretation that we will consider in Lemma~\ref{lem:goodstarts}.
\end{remark}

We will show the existence of a large component (Theorem~\ref{thm:pair_conn}) by proving that Algorithm~\ref{alg:DFS}
(or one of its relatives, which will be defined later), will find a.a.s.\ after $\alpha n^k$ queries a large $j$-tuple-connected component in $H^k(n,p)$ for 
appropriate small $\alpha$.

In the case of vertex-connectivity our $\cH=\left([n],\tbinom{[n]}{k}\right)$ is the complete $k$-uniform 
hypergraph and $H$ is the random $k$-uniform hypergraph $H^k(n,p)$. 

In order to study $j$-tuple-connectivity we could alter Algorithm~\ref{alg:DFS}, in that 
we visit $j$-element sets of vertices instead of single vertices. Instead of this,
 we define the $\tbinom{k}{j}$-uniform hypergraph  $\cH$ as follows: 
 The vertex set of $\cH$ is $\tbinom{[n]}{j}$ and the edges are those $\tbinom{k}{j}$-element 
 subsets of $V(\cH)$ that consist of all $j$-element subsets of some $k$-element set from $[n]$. 
Thus, we have reduced a question about $j$-tuple-connectivity in a $k$-uniform hypergraph to one 
about vertex-connectivity in an appropriately defined auxiliary hypergraph $H\subseteq \cH$. 
In the following we will analyse Algorithm~\ref{alg:DFS} when applied to $\cH$ and $H=\cH_p$.

At several points in this paper we will want to calculate an upper bound on the number of edges found in some subset of the DFS process, e.g.\ the set of queried edges that contain a given vertex $v$ 
or, generally, a given $\ell$-set $L$. 
It will often be convenient to simplify such situations by allowing some additional queries which are not actually made within this subset (or possibly within the DFS process at all). Formally, we couple the subset of the DFS with a number of dummy variables which are also i.i.d.\ Bernoulli random variables with probability $p$, and which mimic these additional queries. Then the number of `1's in the subset we consider is certainly at most the number of `1's in the subset together with the dummy variables. In what follows we shall therefore assume the existence of these extra queries without mentioning the formal interpretation.

\subsection{Chernoff bounds}
We will use the following version of the Chernoff bound from~\cite[Theorem~2.1]{JLRbook}.
\begin{theorem}\label{thm:chernoff}
Let $X$ be the sum of $t$ i.i.d.\ Bernoulli random variables with mean $p$, then for $a \ge 0$,
\begin{align*}
   \Pr\left[X\ge \EE(X)+a\right] & \le \exp\left(-\frac{a^2}{2(tp+a/3)}\right)\\
   \Pr\left[X\le \EE(X)-a\right] & \le \exp\left(-\frac{a^2}{2tp}\right).
\end{align*}
\end{theorem}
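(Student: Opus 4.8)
The plan is to use the standard exponential-moment method (Bernstein/Bennett), which reduces both estimates to two elementary real-variable inequalities. Write $\mu := \EE(X) = tp$ and $X = \sum_{i=1}^{t} X_i$ with the $X_i$ i.i.d.\ Bernoulli$(p)$. For the upper tail, fix $\lambda > 0$; Markov's inequality applied to $e^{\lambda X}$, together with independence and the bound $1 + x \le e^x$, gives
\[
  \Pr[X \ge \mu + a] \le e^{-\lambda(\mu+a)}\,\EE[e^{\lambda X}] = e^{-\lambda(\mu+a)}\bigl(1 - p + p e^{\lambda}\bigr)^t \le \exp\!\bigl(\mu(e^\lambda - 1) - \lambda(\mu + a)\bigr).
\]
Minimising the exponent over $\lambda > 0$ yields $\lambda = \ln(1 + a/\mu)$ and hence the Bennett-type bound $\Pr[X \ge \mu + a] \le \exp\!\bigl(-\mu\, h(a/\mu)\bigr)$, where $h(u) := (1+u)\ln(1+u) - u$.

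For the lower tail the symmetric computation with $e^{-\lambda X}$, $\lambda > 0$, gives, for $0 \le a < \mu$, the bound $\Pr[X \le \mu - a] \le \exp\!\bigl(-\mu\,\psi(a/\mu)\bigr)$ with $\psi(x) := x + (1-x)\ln(1-x)$; the remaining cases are trivial, since $X \ge 0$ forces $\Pr[X \le \mu - a] = 0$ when $a > \mu$, and $\Pr[X \le 0] = (1-p)^t \le e^{-\mu} \le e^{-\mu/2}$ when $a = \mu$.

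It remains to pass from these sharp bounds to the stated quadratic ones: substituting $u = a/\mu$, the claimed inequalities are exactly $h(u) \ge \tfrac{u^2}{2(1 + u/3)}$ for $u \ge 0$ and $\psi(x) \ge x^2/2$ for $0 \le x < 1$. The second is immediate, since $\psi(0) = 0$ and $\psi'(x) = -\ln(1-x) \ge x$. The first --- the only genuinely non-trivial point --- I would establish by setting $f(u) := h(u)\,(1 + u/3) - u^2/2$ and differentiating three times: one checks $f(0) = f'(0) = f''(0) = 0$ and, after simplification, $f'''(u) = \tfrac{2u}{3(1+u)^2} \ge 0$ for $u \ge 0$, so repeated integration gives $f \ge 0$. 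I expect this last elementary calculus estimate to be the main (indeed essentially the only) obstacle; everything else is the routine Laplace-transform argument. Since the statement is quoted verbatim from~\cite{JLRbook}, one could alternatively simply refer to the proof given there.
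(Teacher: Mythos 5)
Your proof is correct: the exponential-moment bound, the optimization $\lambda=\ln(1+a/\mu)$, the Bennett form $\exp(-\mu h(a/\mu))$, the edge cases for $a\ge\mu$, and the two calculus inequalities $\psi(x)\ge x^2/2$ and $h(u)\ge u^2/(2(1+u/3))$ (with $f'''(u)=\tfrac{2u}{3(1+u)^2}\ge 0$) all check out. The paper itself gives no proof, simply citing Theorem~2.1 of Janson--\L{}uczak--Ruci\'nski, and your derivation is precisely the standard argument carried out there, so there is no divergence to report.
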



\section{Before phase transition}\label{sec:pre-pt}
First we prove that when $p$ is not too large, all the components in a generalised random hypergraph on $N$ vertices have size $O(\tfrac{\log N}{\eps^2})$. 
We start with an auxiliary lemma (cf.~\cite[Lemma~1]{KrivSud13}).
\begin{lemma}\label{lem:zero}
 Let $M \in \mathbb{N}$, $\eps\in(0,1)$, $c\in \mathbb{R}$ and let $(X_i)_{i\in[M]}$ be i.i.d.\ random Bernoulli variables with mean $p$. 
 If $p\le\tfrac{1-\eps}{c}$ and $t\ge\tfrac{9c\log M}{\eps^2}$ 
 then with probability at least $1-M\exp(\tfrac{-\eps^2t}{3c}) \ge 1-1/M^2$, the sum of $X_i$'s within any subinterval of 
 length $t$  of the interval $[M]$ is less than  $\tfrac{t}{c}-1$.
\end{lemma}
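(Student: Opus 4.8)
The plan is to bound, for each starting point of a subinterval, the probability that $t$ consecutive Bernoulli variables sum to at least $t/c - 1$, and then take a union bound over the at most $M$ possible starting points. First I would fix a subinterval $I$ of length $t$ and let $X = \sum_{i \in I} X_i$, so that $X$ is a sum of $t$ i.i.d.\ Bernoulli$(p)$ variables with $\EE(X) = tp \le \tfrac{(1-\eps)t}{c}$. I want $\Pr[X \ge \tfrac{t}{c} - 1]$. Since $\tfrac{t}{c} - 1 \ge \tfrac{t}{c} - \tfrac{\eps t}{3c} = \tfrac{(1 - \eps/3)t}{c}$ provided $t \ge \tfrac{3c}{\eps}$ (which is implied by $t \ge \tfrac{9c\log M}{\eps^2}$ once $M \ge 2$, say), it suffices to bound $\Pr[X \ge \tfrac{(1-\eps/3)t}{c}]$. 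Then $a := \tfrac{(1-\eps/3)t}{c} - tp \ge \tfrac{(1-\eps/3)t}{c} - \tfrac{(1-\eps)t}{c} = \tfrac{(2\eps/3)t}{c} \ge \tfrac{\eps t}{2c}$ — a deviation of order $\eps t / c$ above the mean.

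Next I would apply the upper-tail Chernoff bound (Theorem~\ref{thm:chernoff}) with this $a$. We have $tp + a/3 \le \tfrac{t}{c} + \tfrac{a}{3} \le \tfrac{2t}{c}$ crudely (since $a \le t/c$), so
\[
\Pr[X \ge \EE(X) + a] \le \exp\!\left(-\frac{a^2}{2(tp + a/3)}\right) \le \exp\!\left(-\frac{(\eps t/(2c))^2}{4t/c}\right) = \exp\!\left(-\frac{\eps^2 t}{16 c}\right).
\]
The exact constants in the paper's statement ($3c$ in the exponent) will come from being a little more careful — e.g.\ using $tp + a/3 \le t/c$ (valid when $a \le 2t/c - 3tp$, which holds comfortably here) and a slightly sharper lower bound on $a$ — but the shape is already right, and since $t \ge \tfrac{9c\log M}{\eps^2}$ we get $\exp(-\eps^2 t/(3c)) \le M^{-3}$.

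Finally I would union bound over subintervals: there are at most $M$ subintervals of $[M]$ of length $t$ (indexed by their left endpoint), so the probability that some such subinterval has sum at least $\tfrac{t}{c} - 1$ is at most $M \cdot \exp(-\eps^2 t/(3c)) \le M \cdot M^{-3} = M^{-2}$, giving the claimed bound $1 - M^{-2}$. The only mildly delicate points — not really obstacles — are (i) checking that the hypothesis $t \ge \tfrac{9c\log M}{\eps^2}$ really does force $\tfrac{t}{c} - 1 \ge \tfrac{(1-\eps/3)t}{c}$, i.e.\ $t \ge 3c/\eps$, which is immediate for $M \ge 2$; and (ii) tracking the numerical constants so that the exponent comes out to exactly $\eps^2 t/(3c)$ as stated rather than merely $\Omega(\eps^2 t / c)$. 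Neither requires any idea beyond elementary inequalities, so I expect the whole argument to be short.
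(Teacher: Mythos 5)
Your approach is the same as the paper's: fix a length-$t$ subinterval, bound the upper tail by Chernoff, and union-bound over the $\le M$ starting points. The one place you diverge is in how you absorb the ``$-1$'': you replace $\tfrac{t}{c}-1$ by $(1-\eps/3)\tfrac{t}{c}$, which forces $a\ge \tfrac{2\eps t}{3c}$ rather than $a=\tfrac{\eps t}{c}-1$. The paper instead plugs $a=\tfrac{\eps t}{c}-1$ directly into Theorem~\ref{thm:chernoff}, and for $t$ in the relevant range (where $\tfrac{\eps t}{c}\ge 9\log M/\eps$ is comfortably large) this $a$ is close to $\tfrac{\eps t}{c}$, which is exactly what makes the constant $3$ work out.

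The quantitative loss in your split is not recoverable by the ``more careful'' adjustments you sketch, so there is a genuine (if small) gap. Even granting $tp+a/3\le t/c$, with $a\ge\tfrac{2\eps t}{3c}$ you only get an exponent of $\tfrac{(2\eps/3)^2(t/c)^2}{2t/c}=\tfrac{2\eps^2 t}{9c}$, which is strictly weaker than the required $\tfrac{\eps^2 t}{3c}$ (indeed $\tfrac{2}{9}<\tfrac13$). Under the hypothesis $t\ge\tfrac{9c\log M}{\eps^2}$ this yields only $\exp(-2\log M)=M^{-2}$ per interval and hence only $1-M^{-1}$ after the union bound, not the stated $1-M^{-2}$. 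To fix it you would need to concede a \emph{much} smaller fraction of the $\eps$-margin, e.g.\ write $\tfrac{t}{c}-1\ge(1-\tfrac{\eps^2}{9})\tfrac{t}{c}$ (valid since $t\ge\tfrac{9c}{\eps^2}$), so that $a\ge\eps(1-\tfrac{\eps}{9})\tfrac{t}{c}$ — or simply use $a=\tfrac{\eps t}{c}-1$ as the paper does. This is a bookkeeping fix, not a new idea, but your stated plan (``$tp+a/3\le t/c$ and a slightly sharper bound on $a$'') would not by itself produce the constant $3$ without revisiting the $\eps/3$ split.
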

\begin{proof}
 For $p=\tfrac{1-\eps}{c}$ we have $\EE (\sum_{i=t_0}^{t_0+t-1} X_i)=(1-\eps)t/c$.
 We apply Theorem~\ref{thm:chernoff} 
 to bound the probability that the sum within a fixed interval
 of length $t$ is at least $\tfrac{t}{c}-1= \EE (\sum_{i=t_0}^{t_0+t-1} X_i) + \eps\tfrac{t}{c} -1$:
\[
   \Pr\left[\sum_{i=t_0}^{t_0+t-1} X_i> \tfrac{t}{c}-1\right]
   \le \exp\left(-\frac{(\eps\tfrac{t}{c} -1)^2}{2((1-\eps)t/c+\eps t/(3c))}\right)
  <\exp(\tfrac{-\eps^2t}{3c})\le1/M^{3}.
\]  
The union bound over all possible intervals gives the claim.
\end{proof}

\begin{theorem}\label{thm:zero_a}
 Let $\cH$ be an $\ell$-uniform hypergraph on $N$ vertices with maximum degree $\Delta$. 
 Then for $p\le \tfrac{1-\eps}{(\ell-1)\Delta}$, the random hypergraph $\cH_p$ has a.a.s.\ (as $N\rightarrow \infty$)
 only vertex-connected components of size at most $\tfrac{9(\ell-1)\log (\Delta N/\ell)}{\eps^2}$.
\end{theorem}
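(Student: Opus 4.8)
The plan is to run Algorithm~\ref{alg:DFS} (the $\DFS$) on the input hypergraphs $\cH$ and $H = \cH_p$, and to use the structure of depth-first search to translate a large component into a long interval of queries with many positive answers, contradicting Lemma~\ref{lem:zero}. The key observation is the usual one for $\DFS$: at any point in the execution, the active vertices sitting on the stack $S$ form (the vertex set of) a connected sub-structure of $H$ — indeed they lie along a single branch of the search. So if $H$ has a vertex-connected component $C$ with $|C| = s$, then at some moment during the execution the stack $S$ has size exactly $s$ (this happens at the first moment the component $C$ has been fully discovered but not yet fully explored — more carefully, one tracks the size of $S$ over time: it goes up by at most $k-1$ and down by $1$ at each step, and it must reach $|C|$ at some point since every vertex of $C$ is pushed onto $S$ at some time and $C$ is explored contiguously).

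First I would make the elementary counting estimate: to place $s$ vertices onto the stack, the algorithm must have received at least $\lceil (s - k + 1)/(k-1) \rceil \ge (s-k+1)/(k-1)$ positive query answers strictly before that moment (each positive answer to a query of an edge of the $\ell$-uniform hypergraph $\cH$ — here $\ell$ plays the role of $k$ — adds at most $\ell - 1$ new active vertices, since at least one vertex of the queried edge, namely the one we departed from, was already active). Moreover, each such positive answer is ``charged'' to a distinct queried edge of $\cH$, and the edges queried up to any fixed moment form a prefix (initial subinterval) of the sequence $(X_i)_i$ under the coupling of Section~\ref{sec:coupling}. So the number of $X_i = 1$ within some initial subinterval $[1, t_0]$ of $[M]$ is at least $(s - \ell + 1)/(\ell - 1)$, where $M = e(\cH) \le \Delta N / \ell$ is the number of edges of $\cH$ (by handshaking, $\ell \cdot e(\cH) = \sum_v \deg(v) \le \Delta N$).

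Next I would invoke Lemma~\ref{lem:zero} with $c := (\ell - 1)\Delta$ and this value of $M$: since $p \le \tfrac{1-\eps}{(\ell-1)\Delta} = \tfrac{1-\eps}{c}$, the lemma says that with probability $\ge 1 - 1/M^2 \to 1$, the sum of the $X_i$ within \emph{any} subinterval of length $t := \lceil 9c \log M / \eps^2 \rceil = \lceil 9(\ell-1)\Delta \log(\Delta N/\ell)/\eps^2 \rceil$ is less than $t/c - 1 = t/((\ell-1)\Delta) - 1$. Taking the subinterval to be the prefix of length $t$ starting at position~$1$ bounds the number of positive answers among the first $t$ queries — hence among any prefix shorter than $t$ — by less than $t/((\ell-1)\Delta)$. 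A component of size $s$ would force at least $(s - \ell + 1)/(\ell-1)$ positive answers in an initial subinterval of length at most $M$; if that subinterval has length $< t$ we are done directly, and if it has length $\ge t$ we look at its first $t$ queries and still only get $< t/((\ell-1)\Delta)$ positives, but those all occur before the stack reaches size $s$, so in fact any component fully discovered within the first $t$ queries has size at most roughly $t/\Delta + \ell$; one then checks that a component requiring more than $t$ queries to discover is impossible under the good event of Lemma~\ref{lem:zero} by the same interval argument applied to the relevant window. Either way $s \le \tfrac{9(\ell-1)\log(\Delta N/\ell)}{\eps^2}$ (absorbing the additive $O(\ell)$ and ceiling rounding into the constant, valid for $N$ large), which is the claimed bound.

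\textbf{Main obstacle.} The routine parts are the Chernoff-type estimate (already done in Lemma~\ref{lem:zero}) and the handshaking bound $e(\cH) \le \Delta N / \ell$. The one place requiring care is the clean statement and proof of the ``stack reaches size $|C|$'' claim and the bookkeeping that the positive answers needed to build that stack really do lie in an \emph{initial} subinterval of the query sequence of length at most $M$ — i.e.\ correctly handling the fact that queries to edges not touching $C$, and the re-starts of $\DFS$ on new components, are interleaved with the queries relevant to $C$. The resolution is that $\DFS$ explores one component to completion before starting the next (see the outer \texttt{repeat} loop and the remark after Algorithm~\ref{alg:DFS}), so the queries made while $C$ is being discovered and explored are consecutive, and all earlier queries (to previously finished components) only add to the count in the prefix — which is exactly the direction we need for an \emph{upper} bound on $s$. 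This is the heart of the argument; everything else is substitution into Lemma~\ref{lem:zero}.
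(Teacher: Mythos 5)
Your plan to run DFS and invoke Lemma~\ref{lem:zero} is the right framework, and it is the same overall route as the paper. But the way you convert ``large component'' into ``many positives in an interval'' has a genuine error.

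The central claim, that if $H$ has a component $C$ of size $s$ then at some moment the stack $S$ has size exactly $s$, is false. The stack holds only the \emph{active} vertices; vertices are popped as soon as they are explored, so $|S|$ can stay bounded while many more than $|S|$ vertices of $C$ have already been processed. For instance, take $\ell=2$ and $C$ a star $K_{1,s-1}$: the stack oscillates between size~$1$ and size~$2$ and never reaches $s$. More generally the maximum stack size is the depth of the DFS tree, not $|C|$. So the quantity you then feed into Lemma~\ref{lem:zero} — the number of positives required to build a stack of size $s$ — is not forced to be large, and the contradiction never gets off the ground.

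The correct quantity to count is not the stack size but the \emph{total} number of positive answers made during the single while-loop in which $C$ is explored. Since $C$ is connected and every vertex of $C$ is explored by the end of that while-loop, at least $(|C|-1)/(\ell-1)$ positive answers occur there (each adds at most $\ell-1$ new vertices). The second ingredient — which your write-up gestures at with ``the relevant window'' but never pins down — is an upper bound on how many queries that while-loop can contain: at most $|C|\Delta$, since a vertex $v\in C$ is the top of the stack only finitely often and each edge through $v$ is queried at most once, so $v$ contributes at most $\deg_{\cH}(v)\le \Delta$ queries. Now apply Lemma~\ref{lem:zero} with $c=(\ell-1)\Delta$, $M=e(\cH)\le \Delta N/\ell$ and $t=|C|\Delta$ (padding with dummy queries if the while-loop is shorter, as in Section~\ref{sec:coupling}); if $|C|\ge 9(\ell-1)\log(\Delta N/\ell)/\eps^2$, then $t\ge 9c\log M/\eps^2$, so a.a.s.\ the interval containing the while-loop has fewer than $t/c-1=|C|/(\ell-1)-1$ positives, contradicting the $(|C|-1)/(\ell-1)\ge |C|/(\ell-1)-1$ positives actually found. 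This is the paper's argument. The prefix/initial-subinterval route you pursue is also misaligned with the lemma (which bounds intervals of a fixed length $t$, not arbitrary-length prefixes), and your final ``one then checks'' sentence is precisely the step that needs to be made precise — and once you do make it precise, it replaces rather than supplements the stack-size argument.
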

\begin{proof}
We couple $\cH_p$ with a sequence $(X_i)_{i\in[e(\cH)]}$ of i.i.d.\ Bernoulli variables with mean $p$, 
as described in Section~\ref{sec:coupling} (observe: $e(\cH)\le\Delta N/\ell$).  We run Algorithm~\ref{alg:DFS} to explore the hypergraph 
$\cH_p$. 

If there exists a component $C\subseteq V(\cH)$ of size at least 
$\tfrac{9(\ell-1)\log (\Delta N/\ell)}{\eps^2}$, then it is found during some while-loop. 
Since $C$ is 
connected and its vertices are explored, 
we have found in this while-loop at least $(|C|-1)/(\ell-1)$ edges in the
component $C$ (during while-loop, each time we find an edge $e\in H$ we gain
  at most $\ell-1$ new active vertices). The number of queries during this while-loop 
 is  at most $|C|\Delta\ge \tfrac{9(\ell-1)\Delta\log (\Delta N/\ell)}{\eps^2}$. Let us assume for an upper bound that the number of queries is exactly $|C|\Delta$. But then by Lemma~\ref{lem:zero}, we have a.a.s.\ that the number of $X_i$'s 
which are $1$ and thus the number of edges discovered (so far) is  less than  
$\tfrac{|C|}{\ell-1}-1$, contradicting the fact that at least $(|C|-1)/(\ell-1)$ 
edges have been discovered for \emph{every} interval of length $|C|\Delta$, as explained above.
\end{proof}

From Theorem~\ref{thm:zero_a} we immediately obtain the cases of Theorems~\ref{thm:vtx_conn} 
and~\ref{thm:pair_conn} when $p\le (1-\eps)p_{k,j}$. 

\begin{corollary}\label{cor:zero}
  Let $\eps>0$, $k,j\in\Nat$ with $k>j$ be given. If $p\le (1-\eps)\tfrac{(k-j)!}{\binom{k}{j}-1}n^{j-k}$, then 
  a.a.s.\ the $j$-tuple-connected components of the random $k$-uniform hypergraph 
  $H^k(n,p)$ have size $O(\eps^{-2}\log n)$.
\end{corollary}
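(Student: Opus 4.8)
The plan is to obtain Corollary~\ref{cor:zero} as a direct specialisation of Theorem~\ref{thm:zero_a}, applied to the auxiliary hypergraph $\cH$ introduced in Section~\ref{sec:coupling}. Recall that to study $j$-tuple-connectivity in $H^k(n,p)$ we pass to the $\binom{k}{j}$-uniform hypergraph $\cH$ whose vertex set is $\binom{[n]}{j}$ and whose edges are the $\binom{k}{j}$-element sets consisting of all $j$-subsets of a fixed $k$-set of $[n]$. Under this reduction $H^k(n,p)$ corresponds to $\cH_p$, and $j$-tuple-connected components of $H^k(n,p)$ correspond exactly to (ordinary vertex-connected) components of $\cH_p$; in particular, the number of $j$-sets in a $j$-tuple-connected component of $H^k(n,p)$ equals the number of vertices of the corresponding component of $\cH_p$. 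So it suffices to bound the component sizes of $\cH_p$.

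Next I would record the parameters of $\cH$ needed to invoke Theorem~\ref{thm:zero_a}. The hypergraph $\cH$ has $N:=\binom{n}{j}$ vertices, it is $\ell$-uniform with $\ell:=\binom{k}{j}$, and, since a fixed $j$-set lies in exactly $\binom{n-j}{k-j}$ $k$-sets, it is regular of degree $\Delta:=\binom{n-j}{k-j}$. It then remains to check the hypothesis $(1-\eps)p_{k,j}\le\tfrac{1-\eps}{(\ell-1)\Delta}$, i.e.\ $p_{k,j}\le\tfrac{1}{(\binom{k}{j}-1)\binom{n-j}{k-j}}$. This holds for all $n$ because $\binom{n-j}{k-j}=\tfrac{1}{(k-j)!}\prod_{i=0}^{k-j-1}(n-j-i)\le\tfrac{n^{k-j}}{(k-j)!}$, whence $\tfrac{1}{(\binom{k}{j}-1)\binom{n-j}{k-j}}\ge\tfrac{(k-j)!}{(\binom{k}{j}-1)n^{k-j}}=p_{k,j}$.

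With these checks in place, Theorem~\ref{thm:zero_a} gives a.a.s.\ (as $N\to\infty$, equivalently as $n\to\infty$ since $N=\binom{n}{j}$ is a polynomial in $n$ of degree $j\ge 1$) that every vertex-connected component of $\cH_p$ has at most $\tfrac{9(\ell-1)\log(\Delta N/\ell)}{\eps^2}$ vertices; translating back through the reduction, every $j$-tuple-connected component of $H^k(n,p)$ contains at most this many $j$-sets. Finally I would simplify: $\ell-1=\binom{k}{j}-1$ is a constant depending only on $k,j$, and $\Delta N/\ell\le\binom{n-j}{k-j}\binom{n}{j}=n^{O(1)}$, so $\log(\Delta N/\ell)=O(\log n)$ and the bound is $O(\eps^{-2}\log n)$, as claimed. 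I do not anticipate any genuine obstacle here: the statement is essentially an immediate corollary, and the only points requiring care are computing the degree $\Delta=\binom{n-j}{k-j}$ of $\cH$ correctly and verifying the clean inequality $\binom{n-j}{k-j}\le n^{k-j}/(k-j)!$ that makes the threshold $p_{k,j}$ fit the hypothesis of Theorem~\ref{thm:zero_a}.
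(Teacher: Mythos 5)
Your proposal is correct and follows essentially the same route as the paper: pass to the auxiliary $\binom{k}{j}$-uniform hypergraph $\cH$ on vertex set $\binom{[n]}{j}$, note it is $\Delta$-regular with $\Delta=\binom{n-j}{k-j}$, verify the threshold hypothesis of Theorem~\ref{thm:zero_a}, and simplify the resulting bound to $O(\eps^{-2}\log n)$. The only (inessential) difference is that your inequality $\binom{n-j}{k-j}\le n^{k-j}/(k-j)!$ lets you keep the same $\eps$ in the hypothesis of Theorem~\ref{thm:zero_a}, whereas the paper slackens to $\eps/2$ and compares against $\binom{n}{k-j}$; both yield the stated $O(\eps^{-2}\log n)$ bound.
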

\begin{proof}
 We define $\cH$ as follows: the vertex set $V(\cH):=\tbinom{[n]}{j}$ and the set of edges 
 \[
 E(\cH):=\left\{\tbinom{U}{j}\colon U\in \tbinom{[n]}{k} \right\}.
 \]
 Thus, every $S\in V(\cH)$ has degree $\deg_{\cH}(S)=\tbinom{n-j}{k-j}$, implying that if \linebreak[4]
 $p\le (1-\eps)\frac{(k-j)!n^{j-k}}{\binom{k}{j}-1}\le (1-\eps/2) \frac{1}{\left(\binom{k}{j}-1\right)\binom{n}{k-j}}$ (for $n$ large enough),
 then $\cH_p$  has components of size at most
\begin{align*}
\frac{9}{(\eps/2)^2}\left(\binom{k}{j}-1\right)\log \left( \frac{\binom{n-j}{k-j}\binom{n}{j}}{\binom{k}{j}-1} \right)
& \leq \frac{36}{\eps^2}\binom{k}{j}\log (n^k)\\
& =O(\eps^{-2}\log n).
\end{align*}

Therefore, the random hypergraph $H^k(n,p)$ has, for the same $p$, $j$-tuple-connected components of size at most
$O(\eps^{-2}\log n)$ a.a.s.. Thus, the assertions of Theorems~\ref{thm:vtx_conn} 
and~\ref{thm:pair_conn} follow when $p\le (1-\eps)p_{k,j}$.
\end{proof}

\section{After phase transition}\label{sec:post-pt}
\subsection{Algorithm 2}
For the regime when $p\ge (1+\eps)p_{k,j}$ we slightly alter our algorithm in that 
in the main \emph{if}-condition during the while-loop we only consider those unqueried edges 
$e\in E(\cH)$ such that $e\setminus\{x\}$ consists of only \emph{neutral} vertices. Thus, 
when an edge in $H$ (during some while-loop) is found, we get $\binom{k}{j}-1$ new active vertices.  
We refer 
to this algorithm as \emph{Algorithm~2}. Observe that in this case we may not fully discover the $j$-tuple-connected 
components, but if we find a sufficiently large partial component in this way then this clearly gives a lower bound 
on the size of the largest component.

\subsection{Vertex-connectivity}
First we look at the case of the vertex-connectivity in the random $k$-uniform hypergraph. As mentioned above,
Algorithm~2 gives a  lower bound on the size of the largest component (indeed, it constructs a ``hypertree'' in $H^k(n,p)$ of this size). Furthermore, since the expected number of edges sharing two particular vertices 
 in $H^k(n,p)$ for 
$p=(1+\eps)\tfrac{(k-2)!}{n^{k-1}}$ is $O(1)$, we expect that even after removing all such pairs of edges, 
most of the largest component remains connected via loose paths.

We need the following auxiliary lemma.

\begin{lemma}\label{lem:strongconcentration}
Let $j<k\in\Nat$ and let $X_1$,\ldots $X_{\alpha n^k}$ be i.i.d.\ Bernoulli random variables with parameter $p\le k! n^{j-k}$. 
Suppose $\alpha=\alpha(n)$ is such that $\alpha^3 n^j \rightarrow \infty$. 
Then with high probability for every $1 \le t \le  \alpha n^k$ we have
\[
   \left|\sum_{i=1}^t X_i - pt\right|\le \alpha^2 n^j.
\]
\end{lemma}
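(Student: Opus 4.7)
The quantity $M_t := \sum_{i=1}^t X_i - pt$ is a mean-zero martingale with independent increments; equivalently, it is a centred sum of i.i.d.\ random variables. The plan is to bound $\max_{t\le T}|M_t|$ in one shot using \emph{Kolmogorov's maximal inequality} (the $L^2$ version of Doob's inequality, applied to the submartingale $M_t^2$), which states that for any $a>0$,
\[
\Pr\Bigl[\max_{1\le t\le T}|M_t|\ge a\Bigr] \;\le\; \frac{\mathrm{Var}(M_T)}{a^2}.
\]
This is the natural tool here because $M_t$ is a sum of independent, bounded variables with a cheap second moment.

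Setting $T:=\alpha n^k$ (rounding down where needed), the variance is trivial to estimate using the i.i.d.\ Bernoulli$(p)$ structure and the bound $p\le k!\, n^{j-k}$:
\[
\mathrm{Var}(M_T) \;=\; Tp(1-p) \;\le\; Tp \;\le\; \alpha n^k\cdot k!\, n^{j-k} \;=\; k!\,\alpha n^j.
\]
Choosing $a:=\alpha^2 n^j$ in Kolmogorov's inequality then yields
\[
\Pr\Bigl[\max_{1\le t\le T}|M_t|\ge \alpha^2 n^j\Bigr] \;\le\; \frac{k!\,\alpha n^j}{\alpha^4 n^{2j}} \;=\; \frac{k!}{\alpha^3 n^j},
\]
which tends to $0$ by the hypothesis $\alpha^3 n^j\to\infty$. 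That is the entire argument.

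The only real (and quite minor) subtlety is why one must appeal to a maximal inequality rather than to the Chernoff bound of Theorem~\ref{thm:chernoff} that has already been recorded. At a single fixed $t\le\alpha n^k$ one has $tp+a/3=O(\alpha n^j)$, so Chernoff delivers only a tail of order $\exp(-\Theta(\alpha^3 n^j))$; a union bound over the $\alpha n^k=\mathrm{poly}(n)$ possible values of $t$ would then require the strictly stronger hypothesis $\alpha^3 n^j\gg\log n$. A coarser discretisation is no help either, because between two checkpoints that are more than $\alpha^2 n^j$ apart the trajectory of $M_t$ is free to drift by more than $\alpha^2 n^j$. Kolmogorov's inequality sidesteps this by bounding the whole trajectory simultaneously using only the variance at the endpoint, and so matches the stated hypothesis exactly.
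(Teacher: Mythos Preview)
Your proof is correct and genuinely different from the paper's.

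The paper also works with the martingale $M_t=\sum_{i\le t}X_i-pt$, but instead of a second-moment maximal inequality it introduces a stopped version $Y_t$ that freezes as soon as $|M_t|$ exceeds $\alpha^2 n^j$. The event in the lemma then becomes exactly $\{|Y_{\alpha n^k}|\le \alpha^2 n^j\}$, so it suffices to bound the endpoint of $Y$. Because the increments of $Y$ satisfy $-p\le Y_{i+1}-Y_i\le 1$, the paper applies an asymmetric Azuma--Hoeffding inequality (quoted from Bohman) to get
\[
\Pr\bigl(|Y_{\alpha n^k}|>\alpha^2 n^j\bigr)\le 2\exp\!\left(-\frac{(\alpha^2 n^j)^2}{3p\,\alpha n^k}\right)\le 2\exp\!\left(-\frac{\alpha^3 n^j}{3k!}\right),
\]
an exponential tail.

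Your Kolmogorov argument is more elementary---it needs only the variance and no external martingale lemma---and it matches the hypothesis $\alpha^3 n^j\to\infty$ on the nose, which is all the lemma asks for. What you lose is quantitative: your failure probability is $O\bigl((\alpha^3 n^j)^{-1}\bigr)$ rather than $\exp(-\Theta(\alpha^3 n^j))$. For the applications in this paper (Theorems~\ref{thm:vertex_one} and~\ref{thm:pair_conn}) the weaker bound is already enough, so your route is a perfectly good, and arguably cleaner, substitute. Your closing remark about why a naive Chernoff-plus-union-bound would require $\alpha^3 n^j\gg\log n$ is also correct; the paper's stopping trick is precisely what lets it recover an exponential bound without paying that price.
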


Note that the concentration given by this lemma is useless for very small $t$. However, we will only need to apply it for $t=\Theta(\alpha n^k)$, where it gives a better concentration than that which would be given by applying a Chernoff bound and a union bound over all $t$.

To prove this lemma, we will need the following martingale result, an asymmetric version of the Hoeffding inequality proved by Bohman~\cite{Bo09}.  

\begin{lemma}[Lemmas~6 and~7 from~\cite{Bo09}]\label{lem:martingale}
Suppose $0=Y_0,Y_1,\ldots,Y_m$ is a martingale in which $-c \le Y_{i} - Y_{i-1} \le C$ for all $1\le i \le m$ and some real numbers $c,C >0$ with $c \le C/10$. Then for every $0<a<cm$,
\[
\Pr (|Y_m| \ge a) \le 2\exp \left(\frac{-a^2}{3cCm}\right) .
\]
\end{lemma}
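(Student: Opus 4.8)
The plan is to apply Lemma~\ref{lem:martingale} to the martingale $Y_t:=\sum_{i=1}^t X_i-pt$, after upgrading that lemma to a maximal inequality and then splitting on the size of $p$. Write $S_t:=\sum_{i=1}^t X_i$, so that $(Y_t)_{t\ge 0}$ is a martingale with increments $Y_t-Y_{t-1}=X_t-p\in[-p,\,1-p]$; since $j<k$ we have $p\le k!\,n^{j-k}\to 0$, so for $n$ large enough we may take $c:=p$ and $C:=1-p$ in Lemma~\ref{lem:martingale}, the hypothesis $c\le C/10$ being equivalent to $p\le 1/11$. I first claim the maximal version
\[
  \Pr\Bigl(\max_{1\le t\le m}|Y_t|\ge a\Bigr)\le 2\exp\Bigl(\tfrac{-a^2}{3cCm}\Bigr)\qquad(0<a<cm).
\]
To see this, let $T$ be the least $t\le m$ with $|Y_t|\ge a$, or $T:=m$ if there is none. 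Then $T$ is a bounded stopping time, the stopped process $(Y_{\min\{t,T\}})_{t=0}^m$ is again a martingale starting at $0$ with increments in $[-c,C]$, and on the event $\{\max_{1\le t\le m}|Y_t|\ge a\}$ one has $\min\{m,T\}=T$ and hence $|Y_{\min\{m,T\}}|\ge a$; so the claim follows from Lemma~\ref{lem:martingale} applied to this stopped martingale.

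Now put $m:=\alpha n^k$ and $A:=\alpha^2 n^j$, and note that $\alpha^3 n^j\to\infty$ forces $A=(\alpha^3 n^j)^{2/3}n^{j/3}\to\infty$ as well. If $pm>A/2$, I apply the maximal inequality with $a:=A/2$, which is admissible since $0<A/2<pm=cm$; using $C\le 1$ and $pm\le k!\,\alpha n^j$, the exponent $(A/2)^2/(3pCm)$ is at least $(A/2)^2/(3pm)\ge \alpha^3 n^j/(12\,k!)\to\infty$, so a.a.s.\ $\max_{1\le t\le m}|Y_t|<A/2<A$. If instead $pm\le A/2$, then $pt\le pm\le A$ for every $t\le m$, so $Y_t=S_t-pt\ge -pt\ge -A$ holds deterministically; on the other hand $Y_t=S_t-pt\le S_t\le S_m$ by monotonicity of $t\mapsto S_t$, and since $\EE S_m=pm\le A/2$ the Chernoff bound of Theorem~\ref{thm:chernoff} gives $\Pr(S_m\ge A)\le\exp(-3A/20)\to 0$, whence a.a.s.\ $Y_t\le S_m<A$ for all $t\le m$. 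In either case a.a.s.\ $|Y_t|<A$ for all $1\le t\le m$, which is exactly the assertion.

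The one point that needs care is precisely the passage from an endpoint estimate to a bound holding for all $t$ simultaneously: a naive union bound over the $\alpha n^k$ values of $t$ would introduce a factor $n^{\Theta(1)}$ and hence force $\alpha^3 n^j\gg\log n$ rather than merely $\alpha^3 n^j\to\infty$, which is why the martingale inequality must be invoked only once, at $m=\alpha n^k$, through its maximal form. The case split on $pm$ is what makes this possible: when $p$ is too small for the constraint $a<cm$ of Lemma~\ref{lem:martingale} to be usable with $a\approx A$, the centring term $pt$ is itself at most $A/2$, and then monotonicity of the partial sums together with a single Chernoff bound on $S_m$ already does the job.
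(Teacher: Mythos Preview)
Your proposal does not address the stated lemma. Lemma~\ref{lem:martingale} is the abstract martingale inequality quoted from Bohman~\cite{Bo09}; the paper offers no proof of it and simply cites it. What you have written is a proof of Lemma~\ref{lem:strongconcentration}, which \emph{applies} Lemma~\ref{lem:martingale} as a black box. As a proof of the statement actually given, your argument therefore assumes the very result it is asked to establish.

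If the intended target was Lemma~\ref{lem:strongconcentration}, your argument is correct and follows essentially the same route as the paper, with one extra wrinkle. The paper also reduces everything to a single application of Lemma~\ref{lem:martingale} at time $m=\alpha n^k$ via a stopped martingale: it freezes $Y$ as soon as $|Y_i|>\alpha^2 n^j$ and observes that $|Y_m|\le\alpha^2 n^j$ for the stopped process is equivalent to the desired uniform bound. The paper applies the lemma with $a=\alpha^2 n^j$ directly and does not case-split. Your split on whether $pm>A/2$ is not entirely superfluous, however: Lemma~\ref{lem:martingale} requires $a<cm=p\alpha n^k$, and the hypotheses of Lemma~\ref{lem:strongconcentration} give only an \emph{upper} bound on $p$, so the paper's application tacitly assumes $\alpha n^{j-k}<p$. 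That inequality holds in every later use of the lemma (where $p=(1+\eps)p_{k,j}$ is of order $n^{j-k}$ and $\alpha$ is small), but it is not part of the stated hypotheses. Your Case~2 handles precisely this regime with monotonicity of $S_t$ and a single Chernoff bound on $S_m$, so your version is a bit longer but valid in the full stated generality.
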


\begin{proof}[Proof of Lemma~\ref{lem:strongconcentration}]
Let us define a martingale $Y_0,Y_1,\ldots,Y_{\alpha n^k}$ as follows:
\begin{align*}
Y_0 & :=0\\
Y_{i+1} & :=
\begin{cases}
Y_i + X_{i+1}-p & \mbox{if } |Y_i| \le \alpha^2 n^j ; \\
Y_i & \mbox{otherwise.}
\end{cases}
\end{align*}
Note that this may be seen as a martingale with a stopping time, where the stopping condition is $|Y_i| > \alpha^2 n^j$. It is easy to check that this is indeed a martingale. Furthermore, we have $-p \le Y_{i+1}-Y_i \le 1-p \le 1$. Therefore by Lemma~\ref{lem:martingale} we have
\[
\Pr \left( |Y_{\alpha n^k}| > \alpha^2 n^j\right) \le 2\exp \left( -\frac{(\alpha^2 n^j)^2}{3p\alpha n^k}  \right)
 \le 2\exp \left( -\alpha^3 n^j/(3 k!) \right)
 = o(1).
\]
Furthermore, note that the conclusion of Lemma~\ref{lem:strongconcentration} holds if and only if $|Y_{\alpha n^k}| \le \alpha^2 n^j$, and therefore the above calculation proves the lemma.
\end{proof}

With the above lemma to hand we follow the lines of~\cite[Theorem~2]{KrivSud13} to show

\begin{theorem}\label{thm:vertex_one}
 Let $k\in\Nat$, $k\ge2$ and let $\eps= \eps (n)$ be a function satisfying $\eps = \omega (n^{-1/3})$. If $p=(1+\eps)\tfrac{(k-2)!}{n^{k-1}}$, then a.a.s.\
 the random hypergraph $H^k(n,p)$ contains $\Omega(\eps n)$ vertices that are pairwise connected by loose paths.

\noindent In particular, $H^k(n,p)$ has a vertex-connected component of size $\Omega(\eps n)$.
\end{theorem}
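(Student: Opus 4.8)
The plan is to explore $H=H^{k}(n,p)$ by running \emph{Algorithm~2} on the complete $k$-uniform hypergraph $\cH=\bigl([n],\binom{[n]}{k}\bigr)$, coupled to an i.i.d.\ $\mathrm{Bernoulli}(p)$ sequence $(X_i)$ as in Section~\ref{sec:coupling}, and to push through the Krivelevich--Sudakov depth-first-search argument \cite[Theorem~2]{KrivSud13} in this setting. Write $U(t),S(t),T(t)$ for the explored, active and neutral vertices after $t$ queries. Two structural facts drive the proof. First, in Algorithm~2 every successful query turns exactly $k-1$ neutral vertices active, so the number of discovered vertices after $t$ queries equals (the number of components started so far)$\,+\,(k-1)\sum_{i\le t}X_i$; moreover the stack, read from bottom to top, is an alternating sequence of vertices and queried edges of $H$ in which consecutive edges meet in exactly one vertex — a loose path — so a stack of $s$ vertices already exhibits $s$ vertices that are pairwise connected by loose paths. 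Second, when a vertex $x$ passes from $S$ to $U$ there is no unqueried edge through $x$ whose other $k-1$ vertices are all neutral, and since the neutral set only shrinks with time this forces
\[
   |U(t)|\binom{|T(t)|}{k-1}\ \le\ t
\]
at every time $t$: all these $|U(t)|\binom{|T(t)|}{k-1}$ edges have been queried and found absent.

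I would run the exploration for $N:=\alpha n^{k}$ queries with $\alpha$ a suitably small constant multiple of $\eps$, so that the typical number $(k-1)Np$ of discovered vertices is of order $\eps n$. The point of this choice is that Lemma~\ref{lem:strongconcentration} applies with this $\alpha$ and $j=1$: its hypotheses $\alpha^{3}n^{j}\to\infty$ and $p\le k!\,n^{j-k}$ become $\eps^{3}n\to\infty$ — i.e.\ the standing assumption $\eps=\omega(n^{-1/3})$ — and $(1+\eps)(k-2)!\le k!$. It gives that a.a.s.\ $\bigl|\sum_{i\le t}X_i-pt\bigr|\le\alpha^{2}n=o(\eps n)$ \emph{simultaneously for all $t\le N$}, which is sharper than a Chernoff estimate together with a union bound over $t$ and is the reason the lemma is needed. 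In particular $\sum_{i\le N}X_i=(1+o(1))Np$; together with the fact that a component $C$ completely explored at time $\tau$ satisfies $|C|\binom{n-|C|}{k-1}\le\tau\le N$ (which caps how many vertices can lie in finished components), this pins down the discovered set at time $N$ to size $\Theta(\eps n)$, its found-edge part being $(1+o(1))(k-1)Np$.

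Now run the dichotomy at time $N$. If the stack has already reached a chosen threshold $s_{0}$, the first structural fact finishes the proof. Otherwise the inequality $|U(t)|\binom{|T(t)|}{k-1}\le t$ excludes the regime where $U(N)$ and $T(N)$ are both of order $n$ — there $|U(N)|\binom{|T(N)|}{k-1}$ is a constant fraction of $n^{k}$ and so exceeds $N=\alpha n^{k}$ — so either a constant fraction of all vertices has been discovered inside a single component, which is then a component of size $\Omega(n)$ and a fortiori $\Omega(\eps n)$; or $|T(N)|=(1-o(1))n$, in which case $|U(N)|\binom{|T(N)|}{k-1}\le N$ bounds $|U(N)|$ by $(1+o(1))\tfrac{(k-1)Np}{1+\eps}$, whereas the discovered set has size $(1+o(1))(k-1)Np$ up to the controlled number of components started, so the stack must hold order $\eps\cdot(k-1)Np$ vertices and is a loose path. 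Either way one obtains a loosely connected set, and the last sentence of the theorem then follows by a counting step: with $p=(1+\eps)\tfrac{(k-2)!}{n^{k-1}}$ the expected number of unordered pairs of edges of $H^{k}(n,p)$ meeting in at least two vertices is $O(1)$, so deleting one edge from each such pair keeps that set inside a single component and, that component being essentially a hypertree whose edges pairwise meet in at most one vertex, keeps it connected via loose paths up to an $o(\eps n)$ correction; this also yields the claimed vertex-connected component.

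The real obstacle is the quantitative tuning. Read crudely, the two soft inequalities only force the stack — hence a loosely connected set, and the component containing it — to size of order $\eps^{2}n$, which is the order of the longest loose path when $\eps\to 0$; squeezing this up to the claimed $\Omega(\eps n)$ is where the work lies. It requires using the sharp two-sided estimate of Lemma~\ref{lem:strongconcentration} to the hilt, combining it with a careful bound on how many components can be completed (and how large they can be) before time $N$, and calibrating the constant $\alpha$, the threshold $s_{0}$ and the concentration error against one another; should even the optimised soft argument saturate at $\eps^{2}n$, one supplements it with a second independent exposure $H^{k}(n,p_{2})$, $p_{2}\asymp\eps\,n^{-(k-1)}$, and a merging argument. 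That balancing act, and not the structural set-up, is the hard part.
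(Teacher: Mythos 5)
Your set‑up (Algorithm~2 on $\cH=K^k_n$, the coupling, the inequality $|U(t)|\binom{|T(t)|}{k-1}\le t$ when the stack is empty, and Lemma~\ref{lem:strongconcentration} applied with $j=1$ and $\alpha=\Theta(\eps)$) is correct and is exactly the paper's scaffolding, but your argument aims at the wrong quantity and, as you yourself notice, therefore stalls at $\eps^{2}n$. You try to show that the \emph{stack itself} is large at time $N$, i.e.\ to exhibit a long loose path. That is a strictly harder statement (it is Remark~\ref{rem:vertexpath} in the paper), and the saturation at $\eps^{2}n$ is not an artifact of ``read crudely'' but is the true order of the longest loose path for small $\eps$. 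The theorem, however, does not ask for a long loose path; it asks for many vertices pairwise connected by loose paths, i.e.\ a large component of the hypertree that Algorithm~2 builds. Your proposed fixes (tuning $s_0$, a second exposure) are trying to repair an argument that is aimed at the harder statement, and a second‑round sprinkling will not turn a loose path of length $\eps^2 n$ into one of length $\eps n$ anyway.

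The missing idea is this: it suffices to show the stack is merely \emph{nonempty} throughout the interval $[\tfrac{\alpha}{2}n^k,\alpha n^k]$. If it never empties, then one single component is being explored for $\tfrac{\alpha}{2}n^k$ consecutive queries; by Lemma~\ref{lem:strongconcentration} that interval contributes at least $p\cdot\tfrac{\alpha}{2}n^k-2\alpha^2 n=\Omega(\eps n)$ successful queries, each adding $k-1$ fresh vertices to that one component, so the component has $\Omega(\eps n)$ vertices. Nonemptiness is proved by contradiction, using your own inequality at the right moment: suppose the stack is empty at some $t\in[\tfrac{\alpha}{2}n^k,\alpha n^k]$. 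Then every non‑neutral vertex is explored, there are $s'\ge (k-1)(pt-\alpha^2 n)$ of them, and Algorithm~2 must already have made at least $s'\binom{n-s'}{k-1}$ queries; plugging in and using $\alpha\le \eps/(8k!)$ one gets more than $t$ queries, a contradiction. Note also that once you work with the component rather than the stack, your final counting step (deleting edge pairs meeting in two vertices) is unnecessary: Algorithm~2 only ever queries edges with $k-1$ neutral vertices, so the discovered structure is already a hypertree whose edges meet pairwise in at most one vertex, and loose connectivity of its vertex set is automatic.
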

\begin{proof}
We consider a sequence of i.i.d.\ Bernoulli random variables coupled with $H^k(n,p)$, as explained in Section~\ref{sec:coupling}. 
Our $\cH$ is the complete $k$-uniform hypergraph $K^k_n$ with $n$ vertices. 

We choose $\alpha:=\tfrac{\eps}{8k!}$ with foresight.

We shall claim  that between the query $\tfrac{\alpha}{2} n^k$ and the query $\alpha n^k$ the stack $S$ of active vertices a.a.s.\ hasn't been empty, meaning that 
during this time Algorithm~2 is discovering a single (large) component. 
This, together with Lemma~\ref{lem:strongconcentration}, implies that a.a.s.\ the number of $X_i$'s that are answered as $1$ between 
the query $\tfrac{\alpha}{2}n^k$ and $\alpha n^k$ is at least 
\[
  (p\alpha n^k - \alpha^2 n) - (p\alpha n^k/2 + \alpha^2 n) \ge (k-2)!\alpha n/2 - 2\alpha^2 n \ge \frac{\eps}{16k^2}n,
\]
which yields the assertion of Theorem~\ref{thm:vertex_one}, 
since there are still some unexplored vertices, and therefore Algorithm~2 has found at 
least $\tfrac{\eps}{16 k^2}n$ edges in some component and each such edge makes $k-1$ previously neutral vertices active, which results 
in a component of size $\Omega(\eps n)$.

To prove the claim let us assume that after some $t$ queries where \linebreak[4]
$t\in\{\tfrac{\alpha}{2}n^k,\ldots,\alpha n^k\}$, the 
stack $S$ is empty. By Lemma~\ref{lem:strongconcentration}, a.a.s.\ Algorithm~2 has discovered 
$
pt\pm \alpha^2 n
$
 edges in $H^k(n,p)$. Since with each explored edge, $k-1$ vertices become active, and after emptying the stack $S$ 
all active vertices are explored, this implies that if $s$ edges have been found, then at least $s(k-1)+1$
vertices are explored. Observe that when the stack $S$ is empty there are only explored and neutral vertices.
Further, if $s'$ vertices are explored then Algorithm~2 must have made (at least) $s'\tbinom{n-s'}{k-1}$ 
queries. Further observe that this function is increasing for $s'\le \tfrac{n}{k+2}-1$. We estimate how many edges have been queried  at time $t\le \alpha n^k$. This number is a.a.s.\ at least:
\begin{align*}
& (pt-\alpha^2 n)(k-1)\binom{n-(pt-\alpha^2 n)(k-1)}{k-1}\\
\ge \; \; & \frac{pt-\alpha^2 n}{(k-2)!}(n-pt(k-1))^{k-1}\\
\ge \; \; & \frac{pt-\alpha^2 n}{(k-2)!}n^{k-1}(1-(1+\eps)(k-1)!\alpha )^{k-1}\\
\ge \; \; & (1+\eps/2)t (1-\eps/8) > t
\end{align*}
for $\alpha\le \tfrac{\eps}{8k!}$. However, this 
is a contradiction since we assumed that only $t$ queries have been made so far. Therefore,
for large enough $n$, the stack remains nonempty between queries $\tfrac{\alpha}{2}n^k $ and $\alpha n^k$.
\end{proof}
\begin{remark}\label{rem:vertexpath}
  Similarly to the results in~\cite{KrivSud13} we can show that the large component contains a loose path of length $\Omega_k(\eps^2 n)$ in $H^k(n,p)$
  for $p=(1+\eps)\tfrac{(k-2)!}{n^{k-1}}$. Roughly speaking, the argument is as follows:
We have already shown that the stack of active vertices does not become empty between times $\alpha n^k/2$ and $\alpha n^k$. On the other hand, if the set of active vertices is small enough ($\Theta (\eps^2 n)$ will do), then this will not affect the previous calculations significantly. We can therefore deduce that the set of active vertices never becomes smaller than $\Theta (\eps^2 n)$ in this time interval. But because we are exploring via a depth-first search process, the set of active vertices automatically lies in a loose path (possibly with some explored vertices to complete the edges). 
\end{remark}

\subsection{$j$-tuple-connectivity}
Our aim in this section is to prove Theorem~\ref{thm:pair_conn}. For the remainder of this section we therefore fix $\delta$ and $\eps$ as in Theorem~\ref{thm:pair_conn}. We will also assume that $n\ge n_0$ for some sufficiently large constant $n_0$ which we do not determine explicitly (but which is implicitly dependent on $k,j$ and $\delta$). Let $\alpha = \alpha (n)$ satisfy
\[
\frac{\eps}{32 k! 2^j C} \ge \alpha =\omega(n^{\delta-1}+n^{-j/3}),
\]
where $C$ is a constant depending only on $k,j$ which we determine implicitly later. (We note that for the purposes of this paper, setting $\alpha = \frac{\eps}{32 k! 2^j C}$ would be sufficient. However, in~\cite{CoKaKo14} we will need to quote Lemma~\ref{lem:maxdeg} for a wider range of $\alpha$.)
We first give an outline of the main ideas of the proof.

\subsubsection{Proof sketch of Theorem~\ref{thm:pair_conn}}
The hypergraph $\cH$ which we consider has vertex set $\tbinom{[n]}{j}$ and any $\binom{k}{j}$ $j$-sets, whose union is a $k$-set, form an edge in $\cH$ (cf.\ definition of $\cH$ in Section~\ref{sec:coupling}). 
As explained in Section~\ref{sec:coupling}, the random $\binom{k}{j}$-uniform hypergraph $\cH_p$ we consider is in one-to-one with $H^k(n,p)$. We perform
the same algorithm, Algorithm~2 as described above, i.e.\ only when all vertices but one are 
neutral do we query an edge in $\cH$. Similar to 
 Theorem~\ref{thm:vertex_one} we shall estimate the number of queries made given that 
the stack $S$ is emptied between $\tfrac{\alpha}{2}n^k$ and $\alpha n^k$ queries. 

This time however, we need to take account of the fact that (since $\cH$ 
is clearly \emph{not the complete} hypergraph) not every explored vertex in $\cH$ forms an already queried edge with 
any $\binom{k}{j}-1$ neutral vertices in $\cH$. This is because vertices of $\cH$
 are $j$-element subsets of $[n]$ and edges correspond to only those $\binom{k}{j}$ $j$-sets whose union gives a $k$-element set. 
 Therefore, we will need to keep track of the already discovered $j$-sets of $H^k(n,p)$. 
More precisely, let $G_j$ be the $j$-uniform hypergraph on vertex set $[n]$ whose edges are the $j$-sets which have been discovered by Algorithm~2 up to time $\alpha n^k$ (recall that the $j$-sets are vertices in Algorithm~2). 
We need to bound the degrees of sets of vertices in $G_j$.
 Suppose for the moment that we are able to show Lemma~\ref{lem:maxdeg} below, stating 
 that a.a.s.\ $G_j$ has small maximum degrees depending on $\alpha$. Then from each $j$-set we have made at least $\binom{n-j}{k-j}(1-f(\alpha))$ queries, where $f$ is some function tending to $0$ as $\alpha \rightarrow 0$. Furthermore, we know that we have found approximately $pt$ edges, from each of which we discovered $ \binom{k}{j}-1$ new $j$-sets. Thus the number of queries is at least
\[
pt \left(\binom{k}{j}-1\right)\frac{n^{k-j}}{(k-j)!}(1-f(\alpha))  = (1+\eps)(1-f(\alpha))t >t
\]
for $\alpha$ sufficiently small compared to $\eps$. But this is a contradiction since at time $t$ we have made exactly $t$ queries. This argument will be given in more detail at the end of this section.

\subsubsection{Bounding the maximum degree of $G_j$}
Let $G_j(t)$ denote the $j$-uniform hypergraph on vertex set $[n]$ whose edges are the discovered $j$-sets at time $t$ (so $G_j=G_j(\alpha n^k)$). For each $1\leq \ell <j$, let $\Delta_{\ell}(G_j(t))$ denote the maximum $\ell$-degree of this hypergraph (i.e. the maximum over all $\ell$-sets of the number of edges of $G_j(t)$ containing this $\ell$-set). For convenience, we sometimes use $\Delta_{0}(G_j(t))$ to denote the number of edges in $G_j(t)$ (i.e. the natural generalisation for $\ell=0$). The aim of this section is to prove that a.a.s.\ $G_j(\alpha n^k)$ does not have too large maximum $\ell$-degree for any $0\leq \ell \leq j-1$.

In fact we prove a slightly stronger statement which also applies to a breadth-first search process. We first define two new breadth-first search algorithms:

\begin{itemize}
\item BFS1 is the breadth-first search analogue of Algorithm 1; any edge containing a neutral vertex (which corresponds to a neutral $j$-set) may be queried. Formally, we change line~10 in the algorithm to ``Let $x$ be the bottom vertex of $S$;''.
\item BFS2 is the breadth-first search analogue of Algorithm 2; only edges containing $\binom{k}{j}-1$ neutral vertices (which correspond to neutral $j$-sets) may be queried.
\end{itemize}

We analyse the maximum degrees given by each of the algorithms (Algorithm~1, Algorithm~2, BFS1 and BFS2). Since we will never use specific information about which algorithm we are considering, we will go through all the proofs together and simply refer to the ``search algorithm'', which may be any one of these four. We still use $G_j(t)$ to refer to the hypergraph that has been found by time $t$ using any one of the algorithms.

\begin{lemma}[Bounded degree lemma]\label{lem:maxdeg}  
For some constant $C$, using any one of Algorithm~1, Algorithm~2, BFS1 or BFS2, with probability at least $1-\exp (-n^{\delta/2})$,
\[
\Delta_{\ell}(G_j(\alpha n^k)) \leq C \alpha n^{j-\ell}
\]
for all $0\leq \ell \leq j-1$.
\end{lemma}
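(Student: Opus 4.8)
The plan is to bound $\Delta_\ell(G_j(\alpha n^k))$ by induction on $\ell$, moving downward from $\ell = j-1$ to $\ell = 0$ (equivalently, one may think of it as bounding the number of $j$-sets containing a fixed $\ell$-set in terms of the already-established bounds on higher $\ell'$-degrees). Fix an $\ell$-set $L$. A $j$-set $J$ containing $L$ enters $G_j(t)$ either as the starting $j$-set of a new component, or because it became active when some queried edge $e \in E(\cH)$ (corresponding to a $k$-set $K \supseteq J$) was answered `yes'. So to bound the number of such $J$ it suffices to bound the number of `yes'-answers among queries of edges $e$ whose associated $k$-set $K$ contains $L$ (plus a negligible number of fresh starts, handled via the random ordering $\tau$ — this is where Lemma~\ref{lem:goodstarts} enters, and is presumably why $\tau$ is randomised). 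The number of such queries is at most $\binom{k}{j} \cdot \Delta_{\ell}^{(\mathrm{Kset})}$-type quantities, but more usefully: each relevant $k$-set $K$ is determined by $L$ together with $k - \ell$ further vertices, but crucially, for $K$ to be queried, $K$ must contain a $j$-set that was already discovered and active; counting via that already-discovered $j$-set $J'$ and using that $J' \cap L$ has some size $\ell' \le \ell$ lets us invoke the inductive hypothesis $\Delta_{\ell'}(G_j(t)) \le C\alpha n^{j - \ell'}$.

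Concretely, the key step is: the number of queried edges whose $k$-set contains $L$ is, by a union bound over the size $\ell'$ of the overlap of $L$ with the "old" discovered $j$-set, at most (a constant times) $\sum_{\ell'} \binom{\ell}{\ell'} \cdot \Delta_{\ell'}(G_j(t)) \cdot n^{k - j - (\ell - \ell')}$ — one chooses the old $j$-set $J'$ through an $\ell'$-subset of $L$ (giving $\le \Delta_{\ell'} n^{j - \ell'}$ choices when $\ell' < j$, or $\le \Delta_0$ "choices" of edge when counting from scratch), then completes to a $k$-set using the remaining $\ell - \ell'$ vertices of $L$ plus $k - j - (\ell - \ell')$ free vertices. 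Feeding in $\Delta_{\ell'} \le C\alpha n^{j-\ell'}$ for $\ell' \ge 1$ and the base case $\Delta_0 \le$ (number of queries answered yes) $\le 2\alpha n^k \cdot p \cdot \binom{k}{j} = O(\alpha n^j)$ from Lemma~\ref{lem:strongconcentration}, every term is $O(\alpha n^{k - \ell})$, so the number of relevant queries is $O(\alpha n^{k-\ell})$. Then the number of `yes'-answers among these is a sum of that many i.i.d.\ Bernoulli$(p)$ variables with mean $O(\alpha n^{k-\ell}) \cdot p = O(\alpha n^{j - \ell})$; a Chernoff bound (Theorem~\ref{thm:chernoff}) gives that it exceeds $C\alpha n^{j-\ell}$ with probability at most $\exp(-\Omega(\alpha n^{j-\ell}))$, provided $C$ is chosen large enough relative to the implied constants from the counting step. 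Since $\alpha n^{j-\ell} \ge \alpha n \ge \alpha n^{1} = \omega(n^\delta)$ (using $\alpha = \omega(n^{\delta - 1})$), this is $\exp(-\omega(n^\delta)) \le \exp(-n^{\delta/2})$ per $\ell$-set; a union bound over the $\le n^\ell \le n^j$ choices of $L$ and over the $j$ values of $\ell$ still leaves failure probability $\exp(-n^{\delta/2})$ after adjusting constants.

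There is a subtlety that must be handled carefully: the bound $\Delta_{\ell'}(G_j(t)) \le C\alpha n^{j-\ell'}$ is only known to hold a.a.s.\ at the final time $t = \alpha n^k$, whereas in the counting argument we want to use it at all intermediate times, or at least to control queries made throughout the process. The clean way around this is to run the induction as a statement about a stopping time: define $\tau^*$ to be the first time some $\ell'$-degree exceeds $C\alpha n^{j-\ell'}$ (for some $\ell' \ge 1$ already handled), and argue that conditioned on the process up to any $t < \tau^*$, the inductive bounds are available, so the Chernoff estimate controls the increment; one concludes $\Pr(\tau^* \le \alpha n^k)$ is tiny. Equivalently, since $\Delta_{\ell'}(G_j(t))$ is monotone increasing in $t$, it suffices to prove the inductive bound at time $\alpha n^k$ and then note that in the counting step for level $\ell$ we only ever need the final-time values — but one must check that a query of a $k$-set $K \ni L$ is always "charged" to a $j$-set $J' \subseteq K$ that is present in $G_j(\alpha n^k)$, which is automatic since $G_j(t) \subseteq G_j(\alpha n^k)$. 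I expect this bookkeeping — making the inductive/stopping-time structure rigorous and correctly enumerating the overlap cases in the counting bound (including the $\ell' = \ell$ and $\ell' = 0$ boundary cases, and the fresh-start contribution) — to be the main obstacle; the probabilistic part is a routine Chernoff-plus-union-bound once the deterministic count $O(\alpha n^{k-\ell})$ on the number of relevant queries is in place.
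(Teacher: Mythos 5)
Your high-level decomposition is the same as the paper's: you split queries whose $k$-set contains the fixed $\ell$-set $L$ by the size $\ell'$ of the overlap of $L$ with the source $j$-set, which corresponds exactly to the paper's three contribution types (new starts, ``jumps'' for $\ell'<\ell$, ``branchings'' for $\ell'=\ell$), and you correctly identify that the new starts are handled by the randomness of $\tau$. The stopping-time / inductive framing is also in the same spirit as the paper's proof that $E(t)\Rightarrow E(t+1)$ conditioned on good events. However, there are two genuine gaps.

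First, the induction direction is wrong as stated. You say you induct downward from $\ell=j-1$ to $\ell=0$ ``using bounds on higher $\ell'$-degrees,'' but your counting argument requires $\Delta_{\ell'}$ for $\ell'\le\ell$, i.e.\ \emph{lower} values, and the base case must be $\ell=0$ (total edge count). The induction on $\ell$ has to go upward, which is why the paper defines $C_0$ first and recursively builds $C_\ell$ from $C_{\ell-1}$; the actual inductive variable in the paper is time $t$, with all $\ell$'s bounded simultaneously.

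Second, and more importantly, the $\ell'=\ell$ term is genuinely circular, and your fix — ``choose $C$ large enough relative to the implied constants from the counting step'' — does not work as written, because the implied constant in the count of queries at level $\ell'=\ell$ is itself proportional to $C$: there are up to $C\alpha n^{j-\ell}$ active $j$-sets containing $L$, each making up to $\binom{n-j}{k-j}$ queries, so the expected number of new $j$-sets containing $L$ created by these queries is roughly $(1+\eps)\frac{\binom{k-\ell}{j-\ell}-1}{\binom{k}{j}-1}\cdot C\alpha n^{j-\ell}$. Increasing $C$ scales this term proportionally, so it cannot be made small by choosing $C$ large. What saves the argument is the \emph{subcriticality} $c_\ell:=\frac{\binom{k-\ell}{j-\ell}-1}{\binom{k}{j}-1}<1$ (which requires using the fan-out $\binom{k-\ell}{j-\ell}-1$ for branchings, not $\binom{k-\ell}{j-\ell}$; otherwise for $\ell=1,k=j+1$ the ratio equals $1$). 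Only then does the recursion close via $C_\ell \gtrsim \hat C_\ell/(1-c_\ell)$, where $\hat C_\ell$ collects the contributions from $\ell'<\ell$ and new starts. This subcriticality is the crux of the lemma and is not articulated in your sketch; the paper makes it explicit and instead of a stopping-time Chernoff uses a dedicated subcritical-branching-process tree-size estimate (Lemma~\ref{lem:goodbranchings}) to bound the number of branchings given the number of ``roots'' (new starts and jumps to $L$), which is cleaner because the number of roots does not depend on $\Delta_\ell$ itself. A Chernoff-with-stopping-time version along your lines can likely be made rigorous, but only once the $c_\ell<1$ observation is isolated and the fan-out per overlap class is tracked precisely; as written, the sketch hides the single most delicate step of the proof.
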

Since we will be considering the structure of $G_j(t)$, from now on we will think of the exploration process as one on $j$-sets in $H^k(n,p)$, rather than on vertices in $\cH_p$ (there is of course a natural correspondence between the two).

In fact, we will prove that $\Dfinaltotal \leq C_\ell \alpha n^{j-\ell}$ for each $\ell$, for constants $C_\ell$ which we will determine later, and then we may set $C:= \max_\ell\{C_\ell\}$. Note that by a simple application of the Chernoff bound, the lemma is true for $\ell=0$ if $C_0\ge 2\frac{(k-j)!}{\binom{k}{j}-1}$. For $\ell \ge 1$, we pick an $\ell$-set $L$ and note that there are three ways in which the degree of $L$ in $G_j(t)$ may grow as $t$ increases during the search process:

\begin{enumerate}
\item A {\bf new start} at $L$ occurs when there are no active $j$-sets (all discovered $j$-sets have been explored) and the
 search algorithm picks a new $j$-set from which to start. If this $j$-set contains $L$, then the degree of $L$ in $G_j(t)$ has grown by one.
 Recall that the search algorithm chooses a $j$-set uniformly at random among all neutral $j$-sets, cf.~Algorithm~\ref{alg:DFS}.
\item A {\bf jump} to $L$ occurs when the search process queries a $k$-set $K$ containing $L$ from a $j$-set $J$ not containing $L$ (though possibly intersecting $L$) and the edge $K$ is present. Then for each $A \in \binom{K\backslash L}{j-\ell}$, the $j$-set $A\cup L$ becomes active (if it wasn't already) and the degree of $L$ in $G_j(t)$ grows by at most $\binom{k-\ell}{j-\ell}$ (this is exact for Algorithm~2 or BFS2).
\item From an active $j$-set $J$ containing $L$ we may query a $k$-set $K$ also containing $L$. If this forms an edge then for each $A \in \binom{K\backslash L}{j-\ell}$, the $j$-set $A\cup L$ becomes active (if it wasn't already) and the degree of $L$ in $G_j(t)$ grows by  at most $\binom{k-\ell}{j-\ell}-1$ (this is exact for Algorithm~2 or BFS2). We call this a {\bf branching} at $L$.
\end{enumerate}

We will bound the contributions to the $\ell$-degree $\dtotal$ (which is defined as $|\{J\in E(G_j(t))\colon J\supseteq L\}|$) made by each of these possibilities individually. However we must take care to avoid a circular argument, since the bounds are interdependent.

Let $\Etotal$ be the event that $\Dtotal \leq C_\ell \alpha n^{j-\ell}$ for all $0\leq \ell <j$. We aim to show that with high probability $E{(\alpha n^k)}$ holds, which we do by showing that with high probability, $E{(t-1)} \Rightarrow E{(t)}$ for every $t\leq \alpha n^k$. More precisely, we will first prove some probabilistic lemmas, saying that with high probability, various very likely events will hold throughout the search process. The second part of the proof will be deterministic, showing that conditioned on these good events, $E(t-1) \Rightarrow E(t)$ for any $t \leq \alpha n^k$, and since $E(0)$ automatically holds, by induction $E(\alpha n^k)$ holds.

\subsubsection{Probabilistic Lemmas}
Let us first consider where the new starts are made. Since we select the $j$-set for our new start uniformly at random (it corresponds to choosing a new vertex of $\cH$, and the ordering of $V(\cH)$ was chosen randomly), we expect the new starts to be, in some sense, evenly distributed. The next lemma makes this more precise.

Set $m=2\alpha (k-j)!n^j$. Let $\goodstarts{x}$ be the event that for every $1\le \ell \le j-1$, every 
$\ell$-set is contained in at most $\max \left\{\tfrac{4m j!}{(j-\ell)! n^\ell},n^{\delta}\right\}$ many $j$-sets that were chosen 
to be a new start during the first $x$ new starts. 

Let $\allgoodstarts$ be the intersection of the events $\goodstarts{x}$ over all $x \le m$ and the event $\left\{\sum_{i=1}^{\alpha n^k}X_i \le 2p\alpha n^k\right\}$.

\begin{lemma}\label{lem:goodstarts}
$\Pr (\allgoodstarts) \ge 1-\exp (n^{-\delta/2})$.
\end{lemma}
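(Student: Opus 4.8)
\textbf{Proof proposal for Lemma~\ref{lem:goodstarts}.}

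The plan is to bound the two ingredients of $\allgoodstarts$ separately and then take a union bound. The event $\left\{\sum_{i=1}^{\alpha n^k}X_i \le 2p\alpha n^k\right\}$ is immediate from the upper Chernoff bound of Theorem~\ref{thm:chernoff}: with $t=\alpha n^k$ and $a=p\alpha n^k$ one gets a failure probability at most $\exp(-3p\alpha n^k/8)$, and since $p\alpha n^k = \Theta(\alpha n^j)$ and $\alpha = \omega(n^{\delta-1}+n^{-j/3})$, this is $\exp(-\omega(n^{j/3}\cdot n^{\delta})) $, in particular far smaller than $\exp(-n^{\delta/2})$. (One only needs $\alpha n^j \ge n^{\delta}$, which holds for $n$ large.) So the content of the lemma is in controlling $\goodstarts{x}$ for all $x \le m$.

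For $\goodstarts{x}$ the key observation, recorded in the Remark after the coupling discussion, is that each new start is a $j$-set chosen \emph{uniformly at random among all currently neutral $j$-sets}. The number of neutral $j$-sets is always at least $\binom{n}{j} - \Delta_0(G_j(t))$, and during the first $m$ new starts the number of discovered $j$-sets is at most, say, $\binom{k}{j}\cdot(\text{number of queried edges}) + m \le \binom{k}{j}\cdot 2p\alpha n^k + m = O(\alpha n^j) = o(n^j)$; so at every new start at least $\binom{n}{j}/2$ $j$-sets are still neutral. Hence the probability that a given new start lands inside a \emph{fixed} $\ell$-set $L$ — i.e.\ picks one of the at most $\binom{n-\ell}{j-\ell}$ $j$-sets containing $L$ — is at most $\binom{n-\ell}{j-\ell}/(\binom{n}{j}/2) \le \frac{2 j!}{(j-\ell)!\,n^\ell}(1+o(1))$. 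Therefore, stochastically, the number of the first $x$ new starts that contain $L$ is dominated by a binomial random variable $\Bi\!\left(x,\; \tfrac{(2+o(1))j!}{(j-\ell)!n^\ell}\right)$ with mean at most $\mu := \tfrac{(2+o(1)) x j!}{(j-\ell)! n^\ell} \le \tfrac{(2+o(1)) m j!}{(j-\ell)! n^\ell}$, which is at most $\tfrac{1}{2}\cdot\tfrac{4 m j!}{(j-\ell)!n^\ell}$ for $n$ large. (The stochastic domination holds because conditioning on the history only changes the set of neutral $j$-sets, and the bound on the hitting probability is uniform over all such histories that lie in $\allgoodstarts$ so far; formally one runs the argument along the exploration and at each step the conditional probability is bounded by the above, which suffices for domination.)

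Now fix $\ell$ and $L$. We want $\Pr\big[\text{some }x\le m:\ \#\{\text{first }x\text{ new starts}\supseteq L\} > \max\{\tfrac{4mj!}{(j-\ell)!n^\ell}, n^\delta\}\big]$ to be tiny. Since the count is monotone in $x$, it suffices to bound the probability that after \emph{all} $m$ new starts the count exceeds $T:=\max\{\tfrac{4mj!}{(j-\ell)!n^\ell}, n^\delta\}$. By the domination above this is at most $\Pr[\Bi(m,q)\ge T]$ with mean $\mu \le T/2$; applying the Chernoff bound (Theorem~\ref{thm:chernoff}) with $a = T-\mu \ge T/2$ gives a bound of the form $\exp\!\left(-\frac{(T/2)^2}{2(\mu + T/6)}\right) \le \exp(-cT)$ for an absolute constant $c>0$, using $\mu \le T/2$. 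Since $T \ge n^\delta$, this is at most $\exp(-c n^\delta)$. Finally we union-bound over all choices of $\ell$ (there are $j-1$ of them, a constant) and all $\ell$-sets $L$ (at most $\binom{n}{\ell} \le n^j$ of them): the total failure probability is at most $j\cdot n^j\cdot \exp(-c n^\delta) + \exp(-3p\alpha n^k/8) \le \exp(-n^{\delta/2})$ for $n$ large, since $n^j\exp(-cn^\delta)=\exp(j\log n - c n^\delta)\ll \exp(-n^{\delta/2})$. Note the ``$\Pr(\allgoodstarts)\ge 1-\exp(n^{-\delta/2})$'' in the statement should read $1-\exp(-n^{\delta/2})$ (or $1-\exp(-n^{-\delta/2})$ depending on convention); in any case the bound proved is $1-\exp(-n^{\delta/2})$.

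The main obstacle is the dependence issue: the set of neutral $j$-sets at the time of the $i$-th new start is itself random and depends on the outcomes of all previous queries, so the successive new-start indicators are not independent. The clean way around it is the stochastic-domination argument sketched above — at each new start, \emph{whatever} the history, the conditional probability of landing in $L$ is at most $q = (2+o(1))j!/((j-\ell)!n^\ell)$, because the number of neutral $j$-sets is always $\ge \binom{n}{j}/2$ on the (overwhelmingly likely) event that only $o(n^j)$ $j$-sets have been discovered. A minor subtlety is that this uniform bound on the neutral-set size must itself be justified \emph{before} we have established the degree bounds of Lemma~\ref{lem:maxdeg}; but the crude bound ``number of discovered $j$-sets $\le \binom{k}{j}\cdot(\#\text{queries}) + m$'' combined with the already-proved $\sum X_i \le 2p\alpha n^k$ does this unconditionally, so there is no circularity.
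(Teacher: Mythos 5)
Your proposal is correct and follows essentially the same approach as the paper: bound the edge count by Chernoff, observe that this guarantees at least $\tbinom{n}{j}/2$ neutral $j$-sets at every new start, dominate the count of new starts containing a fixed $\ell$-set $L$ by a binomial, and apply Chernoff plus a union bound over $L$ and $\ell$. Your observation that $\goodstarts{x}$ is monotone in $x$ (so one only needs $x=m$) is a small tidy-up of the paper's union bound over $x$, and you correctly flag the typo in the statement ($1-\exp(n^{-\delta/2})$ should read $1-\exp(-n^{\delta/2})$), but the substance is the same.
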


\begin{proof}
By the Chernoff bound (Theorem~\ref{thm:chernoff}) we have
\[
\Pr \left(\sum_{i=1}^{\alpha n^k}X_i\ge 2p\alpha n^k\right) \le \exp \left( -\frac{p\alpha n^k}{3p\alpha n^k}\right) = \exp (-\Theta (\alpha n^j)) \le \exp(-n^{2/3}) .
\]
Thus we may assume that we have discovered at most $2p\alpha n^k = O(\alpha n^j)$ edges so far, 
and therefore the number of $j$-sets which are discovered is at most $m + \left(\binom{k}{j}-1\right)2p\alpha n^k = O(\alpha n^j)$. 
Thus whenever we made a new start so far, we always had at least $\tfrac{1}{2}\binom{n}{j}$ $j$-sets available to choose from, and so the probability of picking any one of these was certainly at most $2/\binom{n}{j}$.

Now given any $\ell$-set $L$, the number of $j$-sets in which $L$ lies is less than $\binom{n}{j-\ell}$. 
Therefore the number of new starts at a $j$-set containing $L$ has distribution dominated by $\Bi \left(m, 2\binom{n}{j-\ell}/\binom{n}{j}\right)$, which in turn is dominated 
by the binomial distribution $\Bi \left( \max\{m,n^{\ell+\delta/2}\} , \frac{3j!}{(j-\ell)!n^\ell}\right)$.

By the Chernoff bound, the probability that this is greater than $\max\{\tfrac{4m j!}{(j-\ell)! n^\ell},n^{\delta}\}$ is at most $\exp(-n^{2\delta/3})$, and a union bound over all $x$, $\ell$ and $L$ gives the lemma.
\end{proof}

We next prove an auxiliary lemma, which states that we may ``pick out'' certain (random) subsequences of queries and treat them as an interval in the search process. Recall that our sequence of queries gives a sequence of independent Bernoulli random variables $X_1,X_2,\ldots,X_{\binom{n}{k}}$. We will be considering a random subsequence $t_1,t_2,\ldots,t_s$ from $[\binom{n}{k}]$. We say ``$t_i$ is determined by the values of $X_1,\ldots,X_{t_i-1}$'' to mean the following: For any $j$, whether the event $\{t_i=j\}$ holds is determined by the values of $X_1,\ldots,X_{j-1}$. In particular this means that $t_i$ is chosen before $X_{t_i}$ is revealed.

\begin{lemma}\label{lem:subsequence}
Let $S =(t_1,t_2,\ldots,t_s)$ be a (random, ordered) index set chosen according to some criterion such that
\begin{itemize}
\item $t_i$ is determined by the values of $X_1,\ldots,X_{t_i-1}$;
\item with probability $1$ we have $1\le t_1 < t_2 < \ldots < t_s \le \binom{n}{k}$.
\end{itemize}
Then $(X_{t_1},\ldots,X_{t_s}) \sim (Y_1,\ldots,Y_s)$, where $Y_1,\ldots,Y_s$ are independent Be$(p)$ variables. In particular, we may apply a Chernoff bound to $\sum_{i \in S}X_i$.
\end{lemma}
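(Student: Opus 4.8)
The plan is to prove Lemma~\ref{lem:subsequence} by a filtration argument, revealing the Bernoulli variables $X_1, X_2, \ldots$ one at a time and showing that the conditional distribution of each selected variable $X_{t_i}$, given everything seen so far, is exactly $\mathrm{Be}(p)$. The key point is that the selection rule is ``predictable'': the event $\{t_i = r\}$ is measurable with respect to the $\sigma$-algebra $\mathcal{F}_{r-1}$ generated by $X_1, \ldots, X_{r-1}$, so the choice of which index to select next is made \emph{before} the corresponding variable is examined. Because the $X_r$ are mutually independent, conditioning on $\mathcal{F}_{r-1}$ (and hence on the event that $t_i = r$) does not change the law of $X_r$.

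The cleanest way to carry this out is by induction on $s$, the number of selected variables. More precisely, I would show that for any $r_1 < r_2 < \cdots < r_s$ and any $b_1, \ldots, b_s \in \{0,1\}$,
\[
\Pr\bigl( t_1 = r_1, \ldots, t_s = r_s,\; X_{t_1} = b_1, \ldots, X_{t_s} = b_s \bigr) = \Pr\bigl( t_1 = r_1, \ldots, t_s = r_s \bigr) \prod_{i=1}^s p^{b_i}(1-p)^{1-b_i}.
\]
The first step is the base case $s=1$: the event $\{t_1 = r_1\}$ depends only on $X_1, \ldots, X_{r_1 - 1}$, which are independent of $X_{r_1}$, so $\Pr(t_1 = r_1, X_{r_1} = b_1) = \Pr(t_1 = r_1)\Pr(X_{r_1} = b_1)$. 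For the inductive step, I would condition on $\{t_1 = r_1, \ldots, t_{s-1} = r_{s-1}, X_{t_1} = b_1, \ldots, X_{t_{s-1}} = b_{s-1}\}$; on this event, $t_s$ is determined by $X_1, \ldots, X_{r_s - 1}$, so the event $\{t_s = r_s\}$ together with the earlier conditioning is measurable in $\mathcal{F}_{r_s - 1}$, and $X_{r_s}$ is independent of $\mathcal{F}_{r_s - 1}$. Multiplying out and summing over all admissible $r_1 < \cdots < r_s$ gives that the joint law of $(X_{t_1}, \ldots, X_{t_s})$ is that of $s$ independent $\mathrm{Be}(p)$ variables. The final sentence of the lemma — that one may apply a Chernoff bound to $\sum_{i \in S} X_i$ — is then immediate from Theorem~\ref{thm:chernoff}, since $\sum_{i\in S} X_i = \sum_{i=1}^s X_{t_i}$ has the same distribution as a sum of $s$ i.i.d.\ $\mathrm{Be}(p)$ variables (with $s$ itself possibly random but bounded, which does not interfere once we have the distributional identity conditioned on the value of $s$, or more simply by noting the Chernoff bound is uniform in the deterministic parameter count when $s$ is replaced by an almost-sure upper bound).

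I do not expect any serious obstacle here; this is a standard optional-stopping / predictable-selection fact. The one point requiring mild care is bookkeeping the conditioning correctly: one must make sure that the selection of $t_i$ genuinely uses only $X_1, \ldots, X_{t_i - 1}$ and not $X_{t_i}$ itself (this is exactly the hypothesis ``$t_i$ is determined by the values of $X_1, \ldots, X_{t_i-1}$''), and that the indices are strictly increasing with probability $1$ so that no variable is selected twice. A secondary subtlety is that $s$ may itself be random; this is harmless because the distributional identity above holds jointly with the values of all the $t_i$, so conditioning on $\{s = s_0\}$ (equivalently, on the full index set) still leaves the selected variables i.i.d.\ $\mathrm{Be}(p)$, and in the applications $s$ is bounded above by a deterministic quantity, which is all that is needed for the Chernoff estimate.
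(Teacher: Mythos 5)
Your proof is correct and takes essentially the same approach as the paper: both arguments hinge on the predictability hypothesis (that $\{t_i = t\}$ is determined by $X_1,\ldots,X_{t-1}$) together with independence of the $X_t$, organized as a conditional factorization. The paper writes out the conditioning by summing over possible values of $t_i$ via an explicit set of compatible histories $B(t,\mathbf{a}^{(i-1)})$, while you phrase it as an induction on $s$ proving the slightly stronger joint identity involving the $t_i$'s as well; these are cosmetic differences in bookkeeping rather than distinct arguments.
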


This simple lemma may be folklore, but since we cannot find it in the literature, for completeness we present a proof here.

\begin{proof}
Let $a_1,\ldots,a_s$ be any $\{0,1\}$-sequence of length $s$. For ease of notation, for each $i=1,\ldots,s$ we define $\mathbf{X}^{(i)}:= (X_{t_1},\ldots,X_{t_{i}})$ and $\mathbf{a}^{(i)}:=(a_1,\ldots,a_{i})$. Then we have
\begin{equation}\label{eq:subs1}
\Pr\left(\mathbf{X}^{(s)}=\mathbf{a}^{(s)}\right) =\prod_{i=1}^s \Pr\left(X_{t_i}=a_i \mid \mathbf{X}^{(i-1)}=\mathbf{a}^{(i-1)}\right).
\end{equation}
(Note that for the term $i=1$ in the product, the conditioning is empty.)
Furthermore for any $i$
\begin{align}\label{eq:subs2}
& \Pr \left(X_{t_i}=a_i \mid \mathbf{X}^{(i-1)}=\mathbf{a}^{(i-1)}\right) \nonumber\\
& = \sum_{t=1}^{\binom{n}{k}} \Pr\left(t_i=t \mid \mathbf{X}^{(i-1)}= \mathbf{a}^{(i-1)}\right)  \Pr\left(X_{t_i}=a_i \mid \mathbf{X}^{(i-1)}=\mathbf{a}^{(i-1)} \wedge t_i=t \right).
\end{align}
Now for any choice of $t$ and $\mathbf{a}^{(i-1)}$, let $B=B(t,\mathbf{a}^{(i-1)})$ be the set of all $\{0,1\}$-sequences $\mathbf{b}=(b_1,\ldots,b_{t-1})$ with the property that if $(X_1,\ldots,X_{t-1})=\mathbf{b}$, then $\mathbf{X}^{(i-1)}=\mathbf{a}^{(i-1)}$ and $t_i=t$. Note that this is well-defined since the value of each $t_j$ is uniquely determined by the results of the previous queries.

Then we have 
\begin{align*}
& \Pr\left( \mathbf{X}^{(i-1)}=\mathbf{a}^{(i-1)} \wedge t_i=t \wedge X_{t_i}=a_i \right)\\
= & \sum_{\mathbf{b}\in B(t,\mathbf{a}^{(i-1)})} \Pr \left((X_1,\ldots,X_{t-1})= \mathbf{b} \wedge X_{t_i}=a_i \right)\\
= & \sum_{\mathbf{b}\in B(t,\mathbf{a}^{(i-1)})}  \Pr\left(X_{t}=a_i \mid (X_1,\ldots,X_{t-1})= \mathbf{b}\right) \Pr((X_1,\ldots,X_{t-1})= \mathbf{b})\\
= & \Pr(X_t=a_i)\sum_{\mathbf{b}\in B(t,\mathbf{a}^{(i-1)})} \Pr((X_1,\ldots,X_{t-1})= \mathbf{b})\\
= & \Pr\left(\mbox{Be}(p)=a_i\right) \Pr\left(\mathbf{X}^{(i-1)}=\mathbf{a}^{(i-1)} \wedge t_i=t \right)
\end{align*}
where for the third equality we used the fact that the $X_j$ are all independent. This gives
\[
\Pr\left(X_{t_i}=a_i \mid \mathbf{X}^{(i-1)}=\mathbf{a}^{(i-1)} \wedge t_i=t\right) = \Pr\left(\mbox{Be}(p)=a_i\right)
\]
which we substitute into~\eqref{eq:subs2} to obtain 
\begin{align*}
\Pr\left(X_{t_i}=a_i \mid \mathbf{X}^{(i-1)}=\mathbf{a}^{(i-1)}\right)
& = \sum_{t=1}^{\binom{n}{k}} \Pr\left(t_i=t \mid \mathbf{X}^{(i-1)}=\mathbf{a}^{(i-1)}\right) \Pr\left(\mbox{Be}(p)=a_i\right)\\
& = \Pr\left(\mbox{Be}(p)=a_i\right)
\end{align*}
which in turn we substitute into~\eqref{eq:subs1} to obtain
\[
\Pr\left(\mathbf{X}^{(s)}=\mathbf{a}^{(s)}\right) =\prod_{i=1}^s \Pr\left(\mbox{Be}(p)=a_i\right).
\]
Since $\mathbf{a}^{(s)}$ was arbitrary, the lemma follows.
\end{proof}

We will apply Lemma~\ref{lem:subsequence} to prove two further probabilistic lemmas.

For any $x \in \Nat$, any $1 \le \ell \le j-1$ and any $\ell$-set $L$, let $S(x,L)$ be the set of the first $x$ times at which we make a query which could result in a jump to $L$.

Let $\goodjumps{L}{x}$ be the event that these queries result in at most $2px$ edges (i.e. $\sum_{i \in S(x,L)}X_i \le 2px$). 
Further let $\allgoodjumps$ be the the intersection of all the events $\goodjumps{L}{x}$ over all choices of $\ell, L$ and $x\ge n^{k-j+\delta}$.

\begin{lemma}\label{lem:goodjumps}
For any $n^{k-j+\delta} \le x \in \Nat$, for any $1 \le \ell \le j-1$ and any $\ell$-set $L$, we have $\Pr \left(\goodjumps{L}{x}\right) \ge 1-\exp \left(-n^{2\delta/3}\right)$.\\
Furthermore, $\Pr \left(\allgoodjumps \right) \ge 1-\exp \left(-n^{\delta/2}\right)$.
\end{lemma}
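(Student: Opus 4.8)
The plan is to apply Lemma~\ref{lem:subsequence} to the index set $S(x,L)$ and then invoke a Chernoff bound. First I would check that $S(x,L)$ satisfies the two hypotheses of Lemma~\ref{lem:subsequence}. A query ``could result in a jump to $L$'' precisely when it queries a $k$-set $K \supseteq L$ from an active $j$-set $J \not\supseteq L$; whether the query currently under consideration is of this form is determined entirely by the state of the search process (which $j$-sets are neutral/active/explored, which $k$-sets have been queried, and in what order we proceed), and that state in turn is a deterministic function of the answers $X_1,\ldots,X_{t-1}$ to the previous queries. Hence for the $i$-th such query, occurring at step $t_i$, the event $\{t_i = t\}$ depends only on $X_1,\ldots,X_{t-1}$ — exactly the ``$t_i$ is determined by the values of $X_1,\ldots,X_{t_i-1}$'' condition. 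The second condition, that $1 \le t_1 < t_2 < \cdots < t_s \le \binom{n}{k}$ with probability $1$, is immediate since the $t_i$ are, by definition, successive query times in the search process. (There is a minor bookkeeping point: $S(x,L)$ is defined as the first $x$ such times, but the process may terminate before $x$ jump-queries to $L$ occur; in that case we pad out the sequence with dummy queries as described in Section~\ref{sec:coupling}, which only increases the sum $\sum_{i\in S(x,L)} X_i$, so an upper-tail bound is unaffected.)

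With the hypotheses verified, Lemma~\ref{lem:subsequence} gives $\sum_{i\in S(x,L)} X_i \sim \Bi(x,p)$, so $\EE\big(\sum_{i\in S(x,L)} X_i\big) = px$, and by Theorem~\ref{thm:chernoff} (with $a = px$, $t = x$),
\[
\Pr\left(\sum_{i\in S(x,L)} X_i \ge 2px\right) \le \exp\left(-\frac{(px)^2}{2(xp + px/3)}\right) = \exp\left(-\frac{3px}{8}\right).
\]
Since $p \ge p_{k,j} = \Theta(n^{j-k})$ and $x \ge n^{k-j+\delta}$, we get $px = \Omega(n^{\delta})$, and hence $\exp(-3px/8) \le \exp(-n^{2\delta/3})$ for $n$ large, proving the first assertion $\Pr(\goodjumps{L}{x}) \ge 1 - \exp(-n^{2\delta/3})$.

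For the second assertion I would take a union bound over all $\ell \in \{1,\ldots,j-1\}$, all $\ell$-sets $L$ (of which there are at most $\binom{n}{\ell} \le n^j$, a fixed polynomial), and all integers $x$ with $n^{k-j+\delta} \le x \le \binom{n}{k}$ (at most $n^k$ values). This gives a total of at most $j \cdot n^j \cdot n^k \le n^{3k}$ events, each failing with probability at most $\exp(-n^{2\delta/3})$, so
\[
\Pr(\overline{\allgoodjumps}) \le n^{3k}\exp(-n^{2\delta/3}) \le \exp(-n^{\delta/2})
\]
for $n$ large, since a polynomial factor is absorbed by decreasing the exponent from $2\delta/3$ to $\delta/2$. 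I expect the only real subtlety to be the first step — convincing oneself cleanly that ``could result in a jump to $L$'' is a stopping-time-type condition depending only on past query answers, and handling the padding when fewer than $x$ such queries occur — while the Chernoff estimate and the union bound are routine.
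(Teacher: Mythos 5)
Your proof is correct and follows essentially the same route as the paper's: apply Lemma~\ref{lem:subsequence} to the (random) index set $S(x,L)$ to identify $\sum_{i\in S(x,L)}X_i$ with a $\Bi(x,p)$ variable, use the Chernoff bound with $px=\Omega(n^\delta)$ for the single-event estimate, and then union-bound over $\ell$, $L$ and $x$. Your verification that $S(x,L)$ satisfies the hypotheses of Lemma~\ref{lem:subsequence}, and your handling of the dummy-variable padding when fewer than $x$ relevant queries occur, are details the paper passes over silently; the only cosmetic difference is the polynomial prefactor in the union bound ($n^{3k}$ versus the paper's $n^{2j}$), which is immaterial since either is absorbed by lowering the exponent from $2\delta/3$ to $\delta/2$.
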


\begin{proof}
 We  apply Lemma~\ref{lem:subsequence} to bound the number of jumps to $L$ within $S(x,L)$. Thus
\begin{align*}
\Pr \left(\goodjumps{L}{x}\right) & \ge 1- \exp \left(-\frac{(px)^2}{3px}\right)\\
& \ge 1- \exp \left(-\frac{n^{\delta}(k-j)!}{3\left(\binom{k}{j}-1\right)}\right)\\
& \ge 1-\exp \left(-n^{2\delta/3}\right).
\end{align*}
For the last statement, we take a union bound over all $\sum_{\ell=1}^{j-1}\binom{n}{\ell} \le n^j$ possible choices of $\ell$ and all choices of $x$ (observing that $x$ is certainly at most $\binom{n}{j} \le n^j$). We therefore obtain
\[
\Pr \left(\allgoodjumps \right) \ge 1-n^{2j}\exp \left(-n^{2\delta/3}\right) \ge 1-\exp \left(-n^{\delta/2}\right)
\]
as required.
\end{proof}

We now aim to prove something similar for the number of branchings at a set $L$ of size $\ell$. Fix $L$ and consider a neighbourhood branching process at $L$. More precisely, given a $j$-set $J$ containing $L$, we make a number of 
queries in the search process and whenever we discover an edge, at most  further $\binom{k-\ell}{j-\ell}-1$ $j$-sets containing $L$ become active (these are considered children of the original $j$-set). For an upper bound we assume exactly $\binom{k-\ell}{j-\ell}-1$ $j$-sets become active.

By deleting $L$ from each of the sets we consider, we may view this as a search process on $(j-\ell)$-sets starting at $J\setminus L$ in a $(k-\ell)$-uniform hypergraph. This may not correspond to a simple time interval 
in the branching process, but we pick out only those queries which are made from a $j$-set containing $L$ (this is permissible by Lemma~\ref{lem:subsequence}). 
The hypergraph in which this search process takes place has $n-\ell$ vertices, but for an upper bound we replace this by $n$. Furthermore, we ignore the fact that some $j$-sets 
may already have been discovered some other way, and are therefore not neutral within this search process. If we further assume that from any $(j-\ell)$-set in the process we may still query $\binom{n}{k-j}$ 
many $(k-j)$-sets, (effectively ignoring the fact that we may have seen some before), then we may consider the process no longer as a hypergraph process, but as an abstract branching process in which the number of children has distribution $r \cdot \Bi \left(\binom{n}{k-j},p\right)$, where $r=r(k,j,\ell)= \binom{k-\ell}{j-\ell}-1$. (By the notation $a\cdot X$, for a real number $a$ and real-valued probability distribution $X$, we mean the probability distribution given by $\Pr(a\cdot X = ai)=\Pr(X=i)$ for any real number $i$.) We first aim to replace this by a distribution which is easier to analyse.

For a probability distribution $P$, let $T_P$ be the tree of a branching process starting at a single vertex in which each vertex has number of children with distribution $P$ independently, and let $\tau_P:=|T_P|$. Thus $\tau_P$ defines a probability distribution on $\mathbb{N}$.

\begin{lemma}\label{lem:distbound}
For any $m,r,s \in \Nat$ and $1/r \ge q \in \mathbb{R}$,
$$\Pr (\tau_{r\cdot \Bi (m,q)}\le rs) \ge \Pr (\tau_{\Bi (rm,q)}\le s).$$
\end{lemma}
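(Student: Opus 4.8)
The plan is to couple the two branching processes so that the tree $T_{r\cdot\Bi(m,q)}$ is, in a suitable sense, ``dominated'' by the tree $T_{\Bi(rm,q)}$. The key observation is that a single step of the $r\cdot\Bi(m,q)$ process — one vertex producing $r\cdot Z$ children where $Z\sim\Bi(m,q)$ — can be simulated by $r$ consecutive steps of the $\Bi(rm,q)$ process, provided we are careful about how we group the $rm$ Bernoulli trials. First I would set up the following explicit coupling: run the $\Bi(rm,q)$ branching process, but bundle its vertices into \emph{blocks} of (up to) $r$ vertices, where a block corresponds to one vertex of the $r\cdot\Bi(m,q)$ process. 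Concretely, the root forms the first block; when we explore a block consisting of vertices $v_1,\dots,v_r$, each $v_i$ has $\Bi(m,q)$ children (using a fresh batch of $m$ Bernoulli$(q)$ variables), so the block collectively produces $\Bi(rm,q)$ children in total, and we partition those children into consecutive blocks of size $r$ (the last block possibly incomplete). This realises the $r\cdot\Bi(m,q)$ tree as the ``block-contracted'' version of a \emph{truncated} $\Bi(rm,q)$ tree.

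The next step is to make precise that $\{\tau_{r\cdot\Bi(m,q)} > rs\}$ implies $\{\tau_{\Bi(rm,q)} > s\}$ under this coupling, which gives the inequality on complementary events. In the coupling, the $r\cdot\Bi(m,q)$ tree has more than $rs$ vertices iff it has more than $s$ blocks. Since each block except possibly the last is \emph{full} (has exactly $r$ vertices) — this follows from the way children of a block are partitioned, together with the fact that a childless block simply contributes nothing further — having more than $s$ blocks forces the total number of vertices in the corresponding $\Bi(rm,q)$ tree to exceed $s$: indeed any $s+1$ distinct blocks contain at least $s+1$ distinct vertices of the $\Bi(rm,q)$ tree. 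Hence $\tau_{r\cdot\Bi(m,q)} > rs \Rightarrow \tau_{\Bi(rm,q)} > s$, i.e. $\Pr(\tau_{r\cdot\Bi(m,q)}\le rs)\ge\Pr(\tau_{\Bi(rm,q)}\le s)$, as claimed. The condition $q\le 1/r$ is what we would expect to use to control the mean offspring (both processes then have mean $\le 1$ per ``unit'', relevant for later applications), but note that for the coupling argument itself it may not even be needed — I would check whether it is genuinely required or only stated for convenience of the downstream application.

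The main obstacle I anticipate is getting the bookkeeping of partial blocks exactly right: when a block produces a number of children that is not a multiple of $r$, the trailing incomplete block must be handled consistently, and one must verify that the block-contracted process really does have the exact law $\tau_{r\cdot\Bi(m,q)}$ (in particular that the offspring counts of distinct blocks are independent, which follows because they use disjoint batches of the underlying i.i.d.\ Bernoulli variables) and that the monotone relation between block-count and vertex-count survives these incomplete blocks. An alternative, possibly cleaner route would avoid blocks entirely: observe that $r\cdot\Bi(m,q)$ stochastically dominates... no — the two distributions are not stochastically comparable in the usual sense (their supports differ), so a naive domination of offspring distributions does \emph{not} work, which is precisely why the block-coupling is the right tool. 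I would therefore commit to the block-coupling and spend the care on the partial-block analysis.
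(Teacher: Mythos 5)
Your high-level strategy --- partition a tree into clusters of size $r$ and compare the number of clusters to the size of a $\Bi(rm,q)$ branching process --- is exactly what the paper does, but your concrete coupling has become tangled and, as written, does not realise that strategy. You say you ``run the $\Bi(rm,q)$ branching process'' and bundle its vertices into blocks, yet two lines later you assert that each underlying vertex $v_i$ has $\Bi(m,q)$ children; these two claims are inconsistent, and moreover neither of them describes the tree $T_{r\cdot\Bi(m,q)}$ that you ultimately need to count. The identification ``a block corresponds to one vertex of the $r\cdot\Bi(m,q)$ process'' also contradicts your own later claim that $\tau_{r\cdot\Bi(m,q)} > rs$ iff there are more than $s$ blocks. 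Finally, the bookkeeping of partial trailing blocks, which you (rightly) flag as the main obstacle, is a symptom of having clustered the wrong tree: a correct construction has no partial blocks at all.

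The fix is to cluster $T_{r\cdot\Bi(m,q)}$ itself. There every vertex has $r\cdot Z_v$ children with $Z_v\sim\Bi(m,q)$, so its children arrive in an exact multiple of $r$ and can be grouped into exactly $Z_v$ clusters of size exactly $r$; no trailing partial cluster ever appears, and the only exceptional cluster is the singleton root. A non-root cluster $C$ of $r$ siblings has $\sum_{v\in C} Z_v \sim \Bi(rm,q)$ cluster-children (the root cluster has $\Bi(m,q)$), so the cluster tree is stochastically dominated by $T_{\Bi(rm,q)}$. Since $\tau_{r\cdot\Bi(m,q)} = 1 + r(N-1)$ where $N$ is the number of clusters, $\tau_{r\cdot\Bi(m,q)} > rs$ forces $N>s$, which under the domination forces $\tau_{\Bi(rm,q)}>s$, giving the lemma. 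You are right, incidentally, that the hypothesis $q\le 1/r$ plays no role in this coupling; it is present for the downstream application.
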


\begin{proof}
Consider $T_{r\cdot \Bi (m,q)}$ as a process in which each vertex has $\Bi(m,q)$ clusters of children, each cluster containing $r$ vertices. We consider this as a branching process of clusters. Each cluster has $r$ vertices in it, each of which has $\Bi(m,q)$ cluster-children independently. (The exception is the root of the original branching process, which becomes a cluster on its own.) Thus the number of cluster-children of a cluster has distribution $\Bi(rm,q)$ (except for the root cluster, which has $\Bi(m,q)$ cluster-children). Thus the cluster branching process is dominated by the $\Bi(rm,q)$ branching process. Since each cluster contains $r$ vertices, the lemma follows.
\end{proof}

Let us define, for each $1\le \ell \le j-1$,
\[
c_\ell:=\frac{1}{2}+\frac{1}{2}\frac{\binom{k-\ell}{j-\ell}-1}{\binom{k}{j}-1} < 1
\]
and observe that
\[
\max_{1\le \ell \le j-1}c_\ell = c_1.
\]
Let $\brconst = \brconst(k,j):= \frac{256}{(1-c_1)^4}$.

For any $n^{\delta} \le x \in \Nat$, for any $1 \le \ell \le j-1$ and for any $\ell$-set $L$, let $\goodbranchings{L}{x}$ be the event that the first $x$ neighbourhood branching processes started at $L$ result in at most $\brconst x$ branchings.

Let $\allgoodbranchings$ be the the intersection of all the events $\goodbranchings{L}{x}$ over all choices of $L$ and $x \ge n^\delta$.

\begin{lemma}\label{lem:goodbranchings}
For any $n^\delta \le x \in \Nat$, for any $1 \le \ell \le j-1$ and for any $\ell$-set $L$, with probability at least $1-\exp(-x) \ge 1-\exp(-n^\delta)$, the event $\goodbranchings{L}{x}$ holds. Furthermore, with probability at least $1-\exp(-n^{\delta/2})$, the event $\allgoodbranchings$ holds.
\end{lemma}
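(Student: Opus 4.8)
The plan is to combine the branching-process comparison from Lemma~\ref{lem:distbound} with the subsequence trick from Lemma~\ref{lem:subsequence}, and then apply a Chernoff bound to the total number of queries made across the first $x$ neighbourhood branching processes at $L$. First I would fix $\ell$, $L$ and $x\ge n^\delta$, and set $r=r(k,j,\ell)=\binom{k-\ell}{j-\ell}-1$ and $N=\binom{n}{k-j}$. Recall from the discussion preceding the lemma that the number of branchings produced by a single neighbourhood branching process at $L$ (when we pessimistically assume every $(j-\ell)$-set is fresh and every discovered edge spawns exactly $r$ active $j$-sets containing $L$) is stochastically dominated by $\tau_{r\cdot\Bi(N,p)}-1$, i.e.\ by the size of the branching-process tree with offspring distribution $r\cdot\Bi(N,p)$. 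The key point is that successive branching processes (over the first $x$ of them) are all dominated in this way, and — crucially — all the queries they make are distinct queries of $\cH$, so by Lemma~\ref{lem:subsequence} the associated $X_i$'s are genuinely i.i.d.\ $\Be(p)$; this lets us treat the union of the $x$ branching processes as one long i.i.d.\ query sequence.

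Next I would convert the per-process tree-size domination into a clean tail bound. By Lemma~\ref{lem:distbound}, $\tau_{r\cdot\Bi(N,p)}\le r\,\tau_{\Bi(rN,p)}$ stochastically (using $p\le 1/r$ for $n$ large, which holds since $p=\Theta(n^{j-k})$). Now $\Bi(rN,p)$ has mean $rNp\le r\cdot\frac{n^{k-j}}{(k-j)!}\cdot(1+\eps)\frac{(k-j)!}{\binom{k}{j}-1}n^{j-k}=(1+\eps)\frac{r}{\binom{k}{j}-1}$, which for $\eps$ small is at most $2c_\ell-1<1$; so this is a subcritical Galton--Watson process and $\tau_{\Bi(rN,p)}$ has an exponential tail. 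The total number of branchings over the first $x$ processes is then dominated by $\sum_{i=1}^x(\tau_i-1)$ where the $\tau_i$ are i.i.d.\ copies of $r\,\tau_{\Bi(rN,p)}$; equivalently, if $Q$ denotes the total number of queries made in these $x$ processes, the number of branchings is at most $r\cdot\#\{i\le Q:X_i=1\}$ (each of the at most $r\cdot(\text{number of edges found})$ branchings is charged to one of the i.i.d.\ successes), and $Q$ is itself at most $N$ times the total tree size, hence dominated by a sum of $x$ i.i.d.\ sub-critical tree sizes. The cleanest route: show $Q\le \brconst' x$ with probability $1-\exp(-\Omega(x))$ for a suitable constant $\brconst'$ (sum of $x$ i.i.d.\ random variables with exponential tails and mean $O(1)$ concentrates around its mean, so a Cram\'er-type bound gives $\exp(-\Omega(x))$), and then among those $\le\brconst' x$ queries the number of successes is $\Bi(\le\brconst' x,p)$ with tiny mean $O(\brconst' x n^{j-k})$, so a Chernoff bound gives at most, say, $\brconst x/r$ successes — hence at most $\brconst x$ branchings — with probability $1-\exp(-\Omega(x))$. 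Choosing the constant $\brconst=256/(1-c_1)^4$ with room to spare absorbs all these implicit constants. Since $x\ge n^\delta$, the failure probability is $\exp(-\Omega(n^\delta))\le\exp(-n^\delta)$ (after adjusting constants / using $n$ large), giving the first statement.

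For the second statement I would take a union bound over all $\ell\in\{1,\dots,j-1\}$, all $\ell$-sets $L$ (at most $\sum_\ell\binom{n}{\ell}\le n^j$ of them), and all relevant $x$ (at most $\binom{n}{j}\le n^j$ values), exactly as in the proof of Lemma~\ref{lem:goodjumps}: this contributes a factor at most $n^{2j}\le\exp(n^{\delta/3})$ for $n$ large, so
\[
\Pr(\allgoodbranchings)\ge 1-n^{2j}\exp(-n^\delta)\ge 1-\exp(-n^{\delta/2}).
\]
The main obstacle I anticipate is not any single estimate but the bookkeeping needed to justify that the queries across the $x$ separate branching processes really do form an increasing, stopping-time-measurable subsequence of $(X_i)$ — so that Lemma~\ref{lem:subsequence} applies to the whole batch at once — together with making sure the pessimistic replacements (all $(j-\ell)$-sets neutral, full offspring $r$ per edge, $N$ available $(k-j)$-sets from every set, $p\le 1/r$, $\eps$ small so the mean offspring is $<1$) are all simultaneously valid; once the process is honestly dominated by $x$ i.i.d.\ sub-critical Galton--Watson trees, the tail bound is routine.
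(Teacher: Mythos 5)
Your core approach is essentially the one the paper uses: dominate each neighbourhood branching process by a subcritical Galton--Watson tree via Lemma~\ref{lem:distbound}, observe the offspring mean is at most $c_1<1$, bound the sum $s_x=\sum_{i=1}^x \tau_i$ of i.i.d.\ subcritical tree sizes by $\brconst x$ except with probability $\exp(-\Omega(x))$, and finish with the same union bound over $\ell$, $L$ and $x$. The place where you depart from the paper is that you \emph{invoke} a Cram\'er-type large-deviation bound for a sum of i.i.d.\ random variables with exponential tails, whereas the paper proves the needed estimate from scratch: it writes $\Pr(s_x\ge \brconst x)\le\sum_{k_1+\cdots+k_x=\brconst x}\prod_i\Pr(\tau_i\ge k_i)$, bounds each $\Pr(\tau_i\ge k_i)$ by coupling $T_i$ with an i.i.d.\ $\Be(c_1/N)$ sequence and a Chernoff bound (a tree of size $k_i$ requires $k_i-1$ successes among at most $(k_i-1)N$ queries), and then controls the number of compositions by $\binom{\brconst x-1}{x-1}\le\exp(2x\log\brconst)$. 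That is exactly the content of the Cram\'er bound you cite, so this is a legitimate shortcut, but note that the specific value $\brconst=256/(1-c_1)^4$ is not ``room to spare'' pulled from nowhere — it is exactly the constant that makes this explicit computation close.

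The one genuine wrinkle is the second, ``$Q$ then binomial'' layer of your argument. It is both redundant and not quite right. First, once you know $s_x\le\brconst x$ w.h.p., you are done (the number of branchings is at most $(s_x-x)/r\le s_x$), so there is nothing left to prove. Second, the layer as written has a factor-of-$N$ slip: $Q\le N\cdot s_x=\Theta(Nx)$, so claiming $Q\le\brconst' x$ is off by a factor $N=\Theta(n^{k-j})$, and $\Bi(Q,p)$ then has mean $\Theta(x)$, not ``tiny''. Third, $Q$ is a stopping time determined by the very $X_i$'s you are counting successes among, so one cannot simply condition on $\{Q\le \brconst' x\}$ and treat the successes among the first $Q$ queries as $\Bi(Q,p)$; that would require the same Lemma~\ref{lem:subsequence}-style care you apply earlier. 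None of this breaks your proof, because the first half already carries it, but you should drop this layer rather than try to repair it.
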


\begin{proof}
By the arguments above, for an upper bound we may replace each neighbourhood branching process by the branching process $T_{\Bi\left(r\binom{n}{k-j},p\right)}$. 
Note that the expected number of children is at most  $r\binom{n-j}{k-j}p \le (1+\eps)\frac{\binom{k-\ell}{j-\ell}-1}{\binom{k}{j}-1} \le c_\ell \le c_1 < 1$ for $\eps$ small enough. Thus we may consider $T_{\Bi(N,c_1/N)}$, 
where $N:=\left(\binom{k-\ell}{j-\ell}-1\right)\binom{n-j}{k-j}$.

Therefore let $\tau_1,\ldots,t_x$ be independent identically distributed random variables, where each $\tau_i \sim \tau_{\Bi (N,c_1/N)}$ is the size of such a binomial branching process. Furthermore let $s_x:= \sum_{i=1}^x \tau_i$. Our aim is to show that $s_x \le \brconst x$ with probability at least $1-\exp(-x)$.

Let us therefore consider $\Pr(s_x \ge \brconst x)$. Since the $\tau_i$ are independent, we have
\begin{equation}\label{eq:brprob1}
\Pr(s_x \ge \brconst x) \le \sum_{k_1 + \ldots k_x = \brconst x} \; \prod_{i=1}^x \Pr (\tau_i \ge k_i).
\end{equation}
Furthermore we may couple each branching process ${T}_i$ with an infinite $\{0,1\}$-sequence $Y_1,Y_2,\ldots$ of independent Bernoulli$(c_1/N)$ variables. More precisely, we consider ${T}_i$ to be a subtree of the infinite rooted $N$-ary tree in which each edge is present with probability $c_1/N$ independently. Then ${T}_i$ is the subtree containing the root, and $Y_1,Y_2,\ldots$ represent the queries at each edge of this infinite tree according to a search process -- either depth- or breadth-first search as appropriate. (In fact, the sequence in general need not be infinite, but if we have finished exploring the tree we may consider any remaining variables as dummy variables.)

Now let us observe that in order for the tree ${T}_i$ to have size $k_i$, the $k_i$-th vertex must be found when we discover the $(k_i-1)$-th edge, and up to this point we have made at most $(k_i-1)N$ queries. 
Thus using the Chernoff bound (Theorem~\ref{thm:chernoff}) we have
\begin{align}\label{eq:brprob2}
\Pr (\tau_i \ge k_i) & \le \Pr \left( \sum_{j=1}^{(k_i-1)N} Y_j \ge k_i-1 \right) \nonumber\\
& \le  \exp \left( -\frac{(1-c_1)^2(k_i-1)^2}{2\left(c_1(k_i-1)+(1-c_1)(k_i-1)/3\right)} \right) \nonumber\\
& \le \exp \left( -(k_i-1) \frac{(1-c_1)^2}{2}\right).
\end{align}
Thus substituting~\eqref{eq:brprob2} into~\eqref{eq:brprob1} we have
\begin{align}\label{eq:brprob3}
\Pr (s_x \ge \brconst x) & \le \sum_{k_1+\ldots k_x=\brconst x} \prod_{i=1}^x \exp \left( -(k_i-1) \frac{(1-c_1)^2}{2}\right) \nonumber \\
& = \sum_{k_1+\ldots k_x=\brconst x} \exp \left( (1-\brconst )x \frac{(1-c_1)^2}{2}\right) \nonumber \\
& = \binom{\brconst x-1}{x-1} \exp \left( (1-\brconst )x \frac{(1-c_1)^2}{2}\right).
\end{align}
Furthermore,
\begin{align}\label{eq:brprob4}
\binom{\brconst x-1}{x-1} & \le \left( \frac{e(\brconst  x-1)}{x-1} \right)^{x-1} \nonumber \\
& = \exp \left( (x-1)\left(1+\log \left(\frac{\brconst x-1}{x-1}\right)\right) \right) \nonumber \\
& \le \exp \left( 2x \log \brconst \right),
\end{align}
where the last inequality certainly holds if $\brconst \ge e$.
Substituting ~\eqref{eq:brprob4} into~\eqref{eq:brprob3}, we obtain
\begin{align}\label{eq:brprob5}
\Pr (s_x \ge \brconst x) & \le \exp \left(2x\log \brconst + (1-\brconst )x \frac{(1-c_1)^2}{2}\right) \nonumber \\
& \le \exp \left(- \frac{(1-c_1)^2\brconst x}{4} \right),
\end{align}
where the last line holds for $\brconst $ large enough (dependent on $c_1$). In particular, it certainly holds provided
\[\brconst \ge \frac{16 \log \brconst}{(1-c_1)^2}\]
which in turn holds provided
\[
\brconst \ge \frac{256}{(1-c_1)^4}.
\]
Finally, we observe that for such $\brconst$, we also have $ \frac{(1-c_1)^2\brconst}{4} \ge 1$, and therefore by~\eqref{eq:brprob5}
\[
\Pr (s_x \ge \brconst x)  \le \exp (-x) \le \exp(-n^{\delta})
\]
as required by the lemma.

This proves the first part of the lemma, and for the second part we simply take a union bound over all choices of $\ell, L$ and $x$ (of which there are certainly at most $n^{2j}$ in total).
\end{proof}

Finally let
\[
\allgoodevents := \allgoodstarts \wedge \allgoodjumps \wedge \allgoodbranchings.
\]
The following is an immediate corollary of Lemmas~\ref{lem:goodstarts}, \ref{lem:goodjumps} and~\ref{lem:goodbranchings}.

\begin{cor}
$\Pr (\allgoodevents) = 1- 3\exp \left(-n^{\delta/2} \right)$.\endproof
\end{cor}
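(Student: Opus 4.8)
The plan is to obtain this purely formally from the three preceding probabilistic lemmas via a union bound over complements; since a union bound requires no independence, nothing about the interaction of the three events needs to be checked. First I would unwind the definition $\allgoodevents = \allgoodstarts \wedge \allgoodjumps \wedge \allgoodbranchings$, so that the complementary event satisfies $\overline{\allgoodevents} \subseteq \overline{\allgoodstarts} \cup \overline{\allgoodjumps} \cup \overline{\allgoodbranchings}$.

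Next I would quote the three bounds already established: Lemma~\ref{lem:goodstarts} gives $\Pr(\overline{\allgoodstarts}) \le \exp(-n^{\delta/2})$, Lemma~\ref{lem:goodjumps} gives $\Pr(\overline{\allgoodjumps}) \le \exp(-n^{\delta/2})$, and Lemma~\ref{lem:goodbranchings} gives $\Pr(\overline{\allgoodbranchings}) \le \exp(-n^{\delta/2})$. Summing these and applying the union bound yields $\Pr(\overline{\allgoodevents}) \le 3\exp(-n^{\delta/2})$, that is, $\Pr(\allgoodevents) \ge 1 - 3\exp(-n^{\delta/2})$, which is the assertion (the displayed equality being of course to be read as an inequality).

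There is no genuine obstacle here; the only point requiring mild care is the bookkeeping that has already been carried out inside each of the three lemmas. Specifically, one should note that $\allgoodstarts$, $\allgoodjumps$ and $\allgoodbranchings$ are themselves intersections over the ranges of $\ell$, of the relevant $\ell$-set $L$, and of the index $x$ (and, in the case of $\allgoodstarts$, also over the auxiliary edge-count event $\{\sum_{i=1}^{\alpha n^k}X_i \le 2p\alpha n^k\}$), and that those internal union bounds have already been absorbed into the stated $\exp(-n^{\delta/2})$ error terms. Granting that, the corollary is immediate, and in an actual write-up the proof would be a single sentence.
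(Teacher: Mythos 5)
Your proof is correct and matches the paper's approach exactly: the paper treats this as an immediate corollary of Lemmas~\ref{lem:goodstarts}, \ref{lem:goodjumps} and~\ref{lem:goodbranchings} via a union bound, which is precisely what you carry out. Your observation that the displayed equality should be read as an inequality ($\geq$) is also correct and worth noting.
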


\subsubsection{Inductive Proof}
Recall that $\Etotal$ is the event that $\Dtotal \leq C_\ell \alpha n^{j-\ell}$ for all $0\leq \ell <j$.
In this section we will show that $\allgoodevents \Rightarrow E(\alpha n^k)$. More precisely, we prove that $\allgoodevents\Rightarrow E(t)$ for all $t\le \alpha n^k$ by induction on $t$. The base case is trivial, since $E(0)$ holds with probability $1$.

\begin{itemize}
\setlength\itemsep{6pt}
\item Let $\dstart$ be the number of new starts at $L$ by time $t$ and let\\
$\Dstart :=\max \dstart$, where the maximum is over all sets $L$ of size $\ell$.
\item Let $\djump$ be the number of jumps to $L$ by time $t$ and let\\
$\Djump :=\max \djump$, where the maximum is over all sets $L$ of size $\ell$.
\item Let $\dbranch$ be the number of branchings at $L$ up to time $t$ and let\\
$\Dbranch :=\max \dbranch$, where the maximum is over all sets $L$ of size $\ell$.
\end{itemize}
Let $\hat{C}_0:=2$, $C_0^*:=2$ and recursively define 
\begin{align*}
C_\ell & :=\max \left\{ \hat{C}_\ell+C^*_\ell+\tfrac{8j!(k-j)!}{(j-\ell)!},C_{\ell-1} \right\};\\
\hat{C}_{\ell+1} & :=\max \left\{2^{\ell+2}\tfrac{(k-j)!}{\binom{k}{j}-1}C_{\ell},8\right\};\\
C^*_{\ell+1} &:=2k! \hat{C}_{\ell+1} \brconst
\end{align*}
for $\ell\ge 0$, where $\brconst$ is the constant from Lemma~\ref{lem:goodbranchings}. (For the sake of the definition of $C_0$, we adopt the convention that $C_{-1}=0$.)

\begin{itemize}
\setlength\itemsep{6pt}
\item Let $\Ecomp$ be the event that for each $0\leq \ell <j$, $\Dstart \leq \tfrac{8j!(k-j)!}{(j-\ell)!} \alpha n^{j-\ell}$.
\item Let $\Ejump$ be the event that for each $0\leq \ell <j$, $\Djump \leq \hat{C}_\ell \alpha n^{j-\ell}$.
\item Let $\Ebranch$ be the event that for each $0\leq \ell <j$, $\Dbranch \leq C_\ell^* \alpha n^{j-\ell}$.
\end{itemize}

\vspace{0.15cm}

\noindent Note that since $C_\ell \ge \hat{C}_\ell + C_\ell^* +\tfrac{8j!(k-j)!}{(j-\ell)!}$ we have $\Ecomp \wedge \Ejump \wedge \Ebranch \Rightarrow \Etotal$. We aim to show that, conditioned on $\allgoodevents$, none of these can be the first to fail before time $\alpha n^k$. However, we must be careful with the time steps since it may be that two of these events become false simultaneously.

\begin{lemma}\label{lem:newstarts}
$\allgoodstarts \wedge \Etotal \Rightarrow \allgoodstarts \wedge E^{(1)}(t+1)$ for $t \le \alpha n^k$.
\end{lemma}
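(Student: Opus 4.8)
The plan is the following. Since $\allgoodstarts$ is part of the hypothesis it is enough to derive $E^{(1)}(t+1)$, and the heart of the matter is the claim $(\star)$ that the number of new starts made by time $t+1$ is at most $m=2\alpha(k-j)!n^j$. Granting $(\star)$: say there have been exactly $x\le m$ new starts by time $t+1$. For $\ell=0$ this gives $D_0^{(1)}(t+1)=x\le m\le\tfrac{8j!(k-j)!}{j!}\alpha n^j$ directly. For $1\le\ell\le j-1$, the event $\goodstarts{x}$ (which is implied by $\allgoodstarts$ since $x\le m$) states that every $\ell$-set lies in at most $\max\{\tfrac{4mj!}{(j-\ell)!n^\ell},n^\delta\}$ of the first $x$ new starts, that is, of all new starts made by time $t+1$; and since $\tfrac{4mj!}{(j-\ell)!n^\ell}=\tfrac{8j!(k-j)!}{(j-\ell)!}\alpha n^{j-\ell}\ge 8\alpha n^{j-\ell}\ge 8\alpha n$ while $\alpha=\omega(n^{\delta-1})$, this maximum equals $\tfrac{8j!(k-j)!}{(j-\ell)!}\alpha n^{j-\ell}$ for $n$ large. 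Taken together these are precisely the bounds defining $E^{(1)}(t+1)$.

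It remains to prove $(\star)$, which I would do by showing that \emph{every completed iteration of the outer loop consumes at least $\binom{n-j}{k-j}-2C'\alpha n^{k-j}$ queries}, for a constant $C'=C'(k,j)$. Fix such an iteration, with root $j$-set $J$. At the moment $J$ is marked explored, every edge $e\ni J$ still containing a neutral $j$-set (one in $e\setminus\{J\}$ for Algorithm~2 and BFS2, anywhere in $e$ for Algorithm~1 and BFS1) has already been queried, for otherwise $e$ would still be a permissible query from $J$; call these the \emph{good} edges at $J$. If $e\ni J$ is not good, then some $j$-subset $L'\ne J$ of the $k$-set underlying $e$ has been discovered, and $i_0:=|J\cap L'|$ lies in $\{\max(0,2j-k),\dots,j-1\}$. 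Since $\Etotal$ bounds $\Delta_{i_0}(G_j)\le C_{i_0}\alpha n^{j-i_0}$ at every relevant time (the hypergraph $G_j$ only grows, and a single query changes degrees by $O(1)$), the number of such discovered $L'$ is at most $\binom{j}{i_0}C_{i_0}\alpha n^{j-i_0}$, and each of them accounts for at most $\binom{n-(2j-i_0)}{k-(2j-i_0)}\le n^{k-2j+i_0}/(k-2j+i_0)!$ edges through $J$; summing over the finitely many values of $i_0$ bounds the number of non-good edges at $J$ by $C'\alpha n^{k-j}$. An almost identical estimate — now using that an edge through $J$ queried \emph{before} this iteration began must have been queried from some already-discovered $j$-set distinct from $J$ (as $J$ was neutral until this iteration started) — bounds by $C'\alpha n^{k-j}$ the edges through $J$ queried before this iteration, while no good edge can be queried after it. Subtracting, at least $\binom{n-j}{k-j}-2C'\alpha n^{k-j}$ of the good edges at $J$ are queried during this very iteration.

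Since every query belongs to exactly one iteration, the number of completed iterations by time $t+1$ is at most $(t+1)\big/\big(\binom{n-j}{k-j}-2C'\alpha n^{k-j}\big)$. Choosing $\eps_0$ small enough that $2C'(k-j)!\alpha\le\tfrac18$, and $n$ large enough that $\binom{n-j}{k-j}\ge\tfrac78\cdot\tfrac{n^{k-j}}{(k-j)!}$, the denominator is at least $\tfrac34\cdot\tfrac{n^{k-j}}{(k-j)!}$, so the number of completed iterations is at most $\tfrac43(k-j)!\,(t+1)\,n^{-(k-j)}=(1+o(1))\tfrac43(k-j)!\alpha n^j=(1+o(1))\tfrac23m$. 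Allowing one extra new start for a possibly unfinished last iteration, the number of new starts by time $t+1$ is at most $(1+o(1))\tfrac23m+1$, which is less than $m$ for $n$ large because $m=2\alpha(k-j)!n^j=\omega(n^{j-1+\delta})\to\infty$. This establishes $(\star)$ and hence the lemma.

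The main obstacle is the per-iteration query lower bound, and in particular avoiding a circular argument: the edges through a root $J$ that fail to be queried during $J$'s own iteration — because they were queried earlier, or because their other $j$-subsets were discovered first — are controlled exactly by the degree bounds $\Etotal$ that the surrounding induction is in the middle of establishing, which is precisely why the statement reads $\allgoodstarts\wedge\Etotal\Rightarrow E^{(1)}(t+1)$. A second, minor point of care is not to charge a single edge to two different iterations; counting only the edges queried \emph{during} each iteration (rather than all queried edges incident to its root, which would lose a factor $\binom{k}{j}$ and be too weak) takes care of this automatically.
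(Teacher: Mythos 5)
Your proof is correct and follows essentially the same approach as the paper's: use $\Etotal$ to bound the number of edges through a new-start $j$-set $J$ that are ``wasted'' (never queriable from $J$ because another of their $j$-subsets has already been discovered), conclude that each new start forces roughly $\binom{n-j}{k-j}$ queries, and divide the total query budget $t+1\le\alpha n^k+1$ by this to get the bound $m$, after which $\allgoodstarts$ finishes the job. The paper phrases the lower bound as a per-explored-$j$-set query count rather than a per-iteration count, and folds ``queried earlier'' and ``never queriable'' into a single degree estimate via $\sum_{\ell}\binom{j}{\ell}\Delta_\ell(G_j(t))\binom{n-2j+\ell}{k-2j+\ell}$, but these are bookkeeping differences, not a different idea. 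One small imprecision in your write-up: for Algorithm~2 and BFS2 an edge $e\ni J$ is a permissible query from $J$ only when \emph{all} of $e\setminus\{J\}$ is neutral, not when ``one in $e\setminus\{J\}$'' is neutral, so the parenthetical defining ``good'' is stated too weakly; your subsequent characterization of non-good edges (some $L'\ne J$ in $e$ already discovered) and the resulting count are correct for the right definition, so the argument goes through unchanged once ``good'' is taken to mean $e\setminus\{J\}$ entirely neutral.
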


\begin{proof}
That $\allgoodstarts \wedge \Etotal \Rightarrow \allgoodstarts$ is immediate, so we only need to show that $\allgoodstarts \wedge \Etotal \Rightarrow E^{(1)}(t+1)$. 
Note that by $\Etotal$, we have $\Delta_{\ell}(G_j(t))\le C_\ell \alpha n^{j-\ell}$ for all $0\le \ell\le j-1$. Thus, 
for each $j$-set we made at least
\[
\binom{n-j}{k-j}-\sum_{\ell=0}^{j-1} \binom{j}{\ell} \Delta_{\ell}(G_j(t))\binom{n-2j+\ell}{k-2j+\ell}=(1-O(\alpha))\binom{n}{k-j}
\]
queries. 
Thus, the number of new starts we have made is certainly at most
\[
\frac{\alpha n^k}{(1-O(\alpha))\binom{n}{k-j}} \le 2\alpha (k-j)!n^j.
\]
For an upper bound, we will assume that we have made exactly this many. Then by $\allgoodstarts$, for any $\ell$-set $L$ we have made at most $\frac{8j!(k-j)!}{(j-\ell)!}\alpha n^{j-\ell}$ new starts at $L$, as required.
\end{proof}

\begin{lemma}\label{lem:jumps}
$\allgoodjumps \wedge \Etotal \Rightarrow \allgoodjumps \wedge E^{(2)}(t+1)$ for $t \le \alpha n^k$.
\end{lemma}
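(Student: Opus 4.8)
The implication $\allgoodjumps\wedge\Etotal\Rightarrow\allgoodjumps$ is vacuous, so the plan is to prove $\allgoodjumps\wedge\Etotal\Rightarrow E^{(2)}(t+1)$. Fix $1\le\ell\le j-1$ and an $\ell$-set $L$ (the case $\ell=0$ requires nothing, since every $j$-set contains $\emptyset$, so there are no jumps to $\emptyset$ at all). Recall that a jump to $L$ is a \emph{successful} query of a $k$-set $K\supseteq L$ from an active $j$-set $J$ with $L\not\subseteq J$; hence, writing $x$ for the number of queries made up to time $t+1$ that \emph{could} result in a jump to $L$, the number of jumps to $L$ by time $t+1$ is exactly $\sum_{i\in S(x,L)}X_i$, with $S(x,L)$ as in Lemma~\ref{lem:goodjumps}.

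The first step is purely deterministic: bound $x$ using only $\Etotal$. A query counted by $x$ is a query of the $\cH$-edge of some $k$-set $K$ made from an already discovered $j$-set $J$ with $\ell':=|J\cap L|<\ell$, and then necessarily $K\supseteq J\cup L$, a set of size $j+\ell-\ell'$. As each $\cH$-edge is queried at most once, $x$ is at most the number of such pairs $(J,K)$. Summing over $\ell'$, over the $\binom{\ell}{\ell'}$ choices of $J\cap L$, over the at most $\Delta_{\ell'}(G_j(t))\le C_{\ell'}\alpha n^{j-\ell'}$ discovered $j$-sets containing a fixed $\ell'$-subset of $L$ (here we use $\Etotal$), and over the at most $\binom{n}{k-j-\ell+\ell'}\le n^{k-j-\ell+\ell'}$ completions of $K$, and using $C_{\ell'}\le C_{\ell-1}$, we obtain
\[
x\le\sum_{\ell'=0}^{\ell-1}\binom{\ell}{\ell'}C_{\ell-1}\,\alpha n^{j-\ell'}\cdot n^{k-j-\ell+\ell'}=(2^\ell-1)C_{\ell-1}\,\alpha n^{k-\ell}=:x_0
\]
(read as $\lceil\,\cdot\,\rceil$ to make it an integer). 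Since $\ell\le j-1$ and $\alpha=\omega(n^{\delta-1})$, we have $x_0=\omega(n^{k-j+\delta})\ge n^{k-j+\delta}$ for $n$ large, so $\goodjumps{L}{x_0}$ is among the events defining $\allgoodjumps$.

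The second step invokes $\allgoodjumps$. As $x\le x_0$, the set $S(x,L)$ is contained in $S(x_0,L)$ — padded, if fewer than $x_0$ such queries ever occur, by the dummy queries of Section~\ref{sec:coupling} — so the number of jumps to $L$ by time $t+1$ is at most $\sum_{i\in S(x_0,L)}X_i\le 2px_0$ by $\goodjumps{L}{x_0}$. Substituting $p=(1+\eps)p_{k,j}=(1+\eps)\tfrac{(k-j)!}{\binom{k}{j}-1}n^{j-k}$,
\[
2px_0=(1+\eps)(2^\ell-1)\cdot\frac{2(k-j)!}{\binom{k}{j}-1}\,C_{\ell-1}\,\alpha n^{j-\ell}\le\hat{C}_\ell\,\alpha n^{j-\ell},
\]
the last step using $\hat{C}_\ell\ge 2^{\ell+1}\tfrac{(k-j)!}{\binom{k}{j}-1}C_{\ell-1}$ together with $(1+\eps)(2^\ell-1)\le 2^\ell$, which holds because we may assume $\eps\le\eps_0$ for a small constant $\eps_0=\eps_0(k,j)$. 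As $L$ and $\ell$ were arbitrary, $D_\ell^{(2)}(t+1)\le\hat{C}_\ell\alpha n^{j-\ell}$ for all $0\le\ell<j$, i.e.\ $E^{(2)}(t+1)$ holds.

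The main obstacle is the care needed in the first step. One has to count each query exactly once — the pair sum over $(J,K)$ over-counts but is still a valid upper bound because every $\cH$-edge is queried at most once — and, more importantly, $\allgoodjumps$ can only be applied at the deterministic threshold $x_0\ge n^{k-j+\delta}$, not at the random (possibly tiny, even zero) number $x$ of potential jump queries actually made; passing to $x_0$ and relying on the dummy-query coupling to interpret $S(x_0,L)$ is precisely what makes this work. Everything else — in particular verifying that $2px_0$ is bounded by $\hat{C}_\ell\alpha n^{j-\ell}$ rather than merely by $O(\alpha n^{j-\ell})$ — is routine bookkeeping with the recursively defined constants.
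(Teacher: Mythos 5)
Your proof follows essentially the same route as the paper's: decompose the potential jump-queries to $L$ by the size $i=|J\cap L|$ of the intersection, bound each piece using $\Delta_i(G_j(t))\le C_i\alpha n^{j-i}$ from $E(t)$, sum to get a deterministic bound $x_0=O(2^\ell C_{\ell-1}\alpha n^{k-\ell})$ on the number of such queries, and then invoke $\allgoodjumps$ at $x_0$ to conclude the number of actual jumps is $\le 2px_0\le\hat{C}_\ell\alpha n^{j-\ell}$. The few places where you deviate are minor improvements in care rather than different ideas: you note the $\ell=0$ case is vacuous, you verify that $x_0\ge n^{k-j+\delta}$ so that $\goodjumps{L}{x_0}$ is actually one of the events in $\allgoodjumps$, you mention the dummy-query padding needed to make sense of $S(x_0,L)$ when fewer than $x_0$ jump-queries occur, and you keep the tighter $2^\ell-1$ (rather than $2^\ell$), which makes the final numerical check $(1+\eps)(2^\ell-1)\le 2^\ell$ go through cleanly under the standing assumption $\eps\le\eps_0$ — the paper's version, which rounds up to $2^\ell$, is slightly looser on this arithmetic step and implicitly relies on the same smallness of $\eps$.
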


\begin{proof}
Similarly to Lemma~\ref{lem:newstarts}, it is enough to show that $\allgoodjumps \wedge \Etotal \Rightarrow E^{(2)}(t+1)$

Given an $\ell$-set $L$, we consider the number of jumps to $L$ by time $t$. For each $0\leq i <\ell$, the number of queries to $L$ from $j$-sets which intersect $L$ in a set $I$ of $i$ vertices is certainly at most $\Delta_{i}(G_j(t))\leq C_i \alpha n^{j-i}$ (since $E{(t)}$ holds, we can bound the number of $j$-sets which have been active and contain $I$). We have $\binom{\ell}{i}$ such sets $I$, and for each of these, if we are to jump to $L$ we have already chosen $j+\ell-i$ vertices, and therefore have at most $\binom{n}{k-j-\ell +i}$ choices for the remaining vertices. Thus the total number of queries by time $t$ which may have resulted in jumps to $L$ is at most
$$
\sum_{i=0}^{\ell -1} \binom{\ell}{i}C_i \alpha n^{j-i} \binom{n}{k-j-\ell +i} \leq 2^\ell C_{\ell-1} \alpha n^{k-\ell}.
$$ 
Thus by $\allgoodjumps$, the number of jumps to $L$ is at most
\begin{align*}
2\cdot 2^\ell C_{\ell-1} \alpha n^{k-\ell}p & = (1+\eps)2^{\ell+1} C_{\ell-1} \alpha n^{j-\ell}\frac{(k-j)!}{\binom{k}{j}-1}\\
& \leq \hat{C}_\ell \alpha n^{j-\ell}.
\end{align*}
Since $L$ was chosen arbitrarily, this holds for all $L$, and therefore $E^{(2)}(t+1)$ is satisfied, as required.
\end{proof}

\begin{lemma}\label{lem:branchings}
$\allgoodbranchings \wedge \Etotal \Rightarrow \allgoodbranchings \wedge E^{(3)}(t+1)$ for $t \le \alpha n^k$.
\end{lemma}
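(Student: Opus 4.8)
The plan is to mirror the structure of Lemmas~\ref{lem:newstarts} and~\ref{lem:jumps}: it suffices to show $\allgoodbranchings \wedge \Etotal \Rightarrow E^{(3)}(t+1)$, since the preservation of $\allgoodbranchings$ is automatic. So fix an $\ell$-set $L$ with $1\le \ell \le j-1$ and bound $d_L^{(3)}(t+1)$, the number of branchings at $L$ by time $t+1$. A branching at $L$ requires an \emph{active} $j$-set $J\supseteq L$ from which we then query a $k$-set $K\supseteq L$; so first I would bound the number of $j$-sets containing $L$ that are ever active up to time $t$. Every such $j$-set arises either as a new start at $L$, or as a jump to $L$, or is produced by an earlier branching at $L$ (this is exactly the trichotomy set up before the probabilistic lemmas). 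Using $\Etotal$ together with Lemmas~\ref{lem:newstarts} and~\ref{lem:jumps} (or rather the events $\Ecomp, \Ejump$ which $\Etotal$ controls via the fact that $C_\ell$ dominates their constants) one gets that the number of such active $j$-sets is $O(\alpha n^{j-\ell})$ — but crucially the branching contribution is the very quantity we are trying to bound, so this is where the circularity must be handled.

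The key device is the neighbourhood branching process at $L$ introduced just before Lemma~\ref{lem:distbound}: each active $j$-set $J\supseteq L$ that is \emph{not itself a descendant} of another active $j$-set containing $L$ spawns its own such process, and the total number of branchings at $L$ is the total number of nodes across all these processes that correspond to a discovered edge, i.e.\ is at most $\sum (\tau_i - 1)$ where $\tau_i$ is the size of the $i$th process. The roots of these processes are precisely the new starts at $L$ and the jumps to $L$, so by $\Ecomp$ and $\Ejump$ the number $x$ of processes started is at most $\left(\tfrac{8j!(k-j)!}{(j-\ell)!} + \hat{C}_\ell\right)\alpha n^{j-\ell} \le \hat{C}_{\ell+1}$-type bound — more precisely at most $\hat C_\ell \alpha n^{j-\ell}$ from jumps plus the new-start term, and in any case this is $\ge n^\delta$ by the lower bound on $\alpha$ (for $\ell\le j-1$; one may need to treat the $\ell$ for which this count could be tiny separately, but $\alpha = \omega(n^{\delta-1})$ handles $\ell = j-1$ and larger $j-\ell$ only helps). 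Then I would invoke $\goodbranchings{L}{x}$ from the event $\allgoodbranchings$, which says these first $x$ neighbourhood branching processes at $L$ produce at most $\brconst x$ branchings in total. Hence
\[
d_L^{(3)}(t+1) \le \brconst x \le \brconst \cdot \hat{C}_{\ell}'\alpha n^{j-\ell},
\]
and checking the recursive definitions — $\hat C_{\ell+1}$ absorbs the factor $2^{\ell+2}\tfrac{(k-j)!}{\binom kj -1}C_\ell$ coming from the bound on jumps/new starts at $L$, and $C^*_{\ell+1} = 2k!\,\hat C_{\ell+1}\brconst$ is exactly chosen to dominate $\brconst$ times that — gives $d_L^{(3)}(t+1) \le C^*_\ell \alpha n^{j-\ell}$, which is $E^{(3)}(t+1)$.

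The main obstacle, and the point requiring the most care, is the bookkeeping that makes the neighbourhood branching process a genuine \emph{upper bound} coupling for the branchings at $L$ while avoiding circularity: one must argue that every branching at $L$ up to time $t+1$ lies in one of the $x$ processes whose roots are counted by $\Ecomp \wedge \Ejump$ (which depend only on $\Etotal$ at time $t$, not on $E^{(3)}(t+1)$), and that $x$ — the number of roots — is itself bounded using only $\Etotal$. A secondary subtlety is ensuring $x \ge n^\delta$ so that $\goodbranchings{L}{x}$ is available from $\allgoodbranchings$ (or else handling the case $x < n^\delta$ trivially, since then $d_L^{(3)} < n^\delta + \brconst n^\delta \le C^*_\ell \alpha n^{j-\ell}$ using $\alpha n^{j-\ell} \gg n^\delta$ when $j-\ell\ge 1$, which follows from $\alpha = \omega(n^{\delta-1})$). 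Finally one must double-check the edge case $\ell$ near $j$ and that the constants $C^*_{\ell+1}, \hat C_{\ell+1}, C_\ell$ satisfy the claimed inequalities — these are routine but must be stated to close the induction cleanly.
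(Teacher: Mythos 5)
Your proposal matches the paper's argument: bound the number $x$ of neighbourhood branching processes rooted at $L$ (coming from new starts at $L$ and from $j$-sets activated by jumps to $L$), note that one may take $x \ge n^\delta$, and apply $\goodbranchings{L}{x}$ to conclude $d_L^{(3)}(t+1)\le\brconst x\le C_\ell^*\alpha n^{j-\ell}$. Two small points to tighten: each jump to $L$ activates $\binom{k-\ell}{j-\ell}$ $j$-sets containing $L$, so the root count is (new starts)$\,+\,\binom{k-\ell}{j-\ell}\cdot$(jumps), which is exactly why the definition of $C_\ell^*$ carries the $2k!$ factor; and your parenthetical claim that $\Etotal$ ``controls'' $\Ecomp,\Ejump$ because $C_\ell$ dominates their constants runs the implication backwards --- that dominance gives $\Ecomp\wedge\Ejump\wedge\Ebranch\Rightarrow\Etotal$, not the converse --- so the clean fix (which you also mention) is to observe that all three lemmas are only ever invoked under the full hypothesis $\allgoodevents$, from which $E^{(1)}(t)$ and $E^{(2)}(t)$ are available via Lemmas~\ref{lem:newstarts} and~\ref{lem:jumps} before one bounds $x$.
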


\begin{proof}
Since we assume that $E{(t)}$ holds, the number of neighbourhood branching processes which we start at a set $L$ is at most the number of new starts at $L$ plus $\binom{k-\ell}{j-\ell}$ times the number of jumps to $L$, or at most
\[
\left(\frac{8j!(k-j)!}{(j-\ell)!} + \binom{k-\ell}{j-\ell}\hat{C}_\ell \right) \alpha n^{j-\ell} \le  2k!\hat{C}_\ell \alpha n^{j-\ell}.
\]

For an upper bound, we will assume that we have exactly $2k! \hat{C}_\ell  \alpha n^{j-\ell} \ge n^{\delta}$ neighbourhood branching processes. Then by $\goodbranchings{L}{2k! \hat{C}_\ell  \alpha n^{j-\ell}}$, the total number of vertices in all of these branching processes is at most $2k! \brconst \hat{C}_\ell \alpha n^{j-\ell}$
as required.

Since $L$ was chosen arbitrarily, this holds for any $L$, and thus $E^{(3)}(t+1)$ holds.
\end{proof}

Now combining Lemmas~\ref{lem:newstarts},~\ref{lem:jumps} and~\ref{lem:branchings}, we have that for $t \le \alpha n^k$
\begin{align*}
\allgoodevents \wedge \Etotal & \Rightarrow \allgoodevents \wedge E^{(1)}(t+1) \wedge E^{(2)}(t+1) \wedge E^{(3)}(t+1)\\
& \Rightarrow \allgoodevents \wedge E(t+1).
\end{align*}
Since $E(0)$ holds trivially, by induction we may deduce that
$\allgoodevents \Rightarrow E(\alpha n^k)$, and therefore
\[
\Pr ( E(\alpha n^k)) \ge \Pr (\allgoodevents) \ge 1- 3\exp \left( -n^{\delta/2} \right)
\]
as required. This completes the proof of Lemma~\ref{lem:maxdeg}.

\subsubsection{Proof of Theorem~\ref{thm:pair_conn}}\label{sec:endofproof}

We now complete the proof by filling in the details of the argument sketched earlier. We now choose $\alpha = \frac{\eps}{32 k! 2^j C}$.

 We assume that the stack $S$ is empty at some time $t\in\{\tfrac{\alpha}{2}n^k,\ldots,\alpha n^k\}$ 
(and thus there is a new while-loop 
 between queries $\tfrac{\alpha}{2}n^k$ and $\alpha n^k$). Thus, at time $t$ we can estimate, 
using Lemma~\ref{lem:strongconcentration}, that a.a.s.\ at least $tp - \alpha^2 n^j$ edges have been found by time $t$. Recall, that since we run Algorithm~2, 
whenever an edge appears, we discover $\binom{k}{j}-1$ new $j$-sets of vertices, so $e(G_j(t)) \ge (\binom{k}{j}-1)(tp - \alpha^2 n)$ a.a.s..

We note that
\begin{align*}
\frac{\alpha^2 n^j}{tp} & \le \frac{2\left(\binom{k}{j}-1\right)\alpha}{(k-j)!} \le k! \alpha \le \eps/4
\end{align*}
and so 
$$\left(\binom{k}{j}-1\right)\left(tp-\alpha^2 n^j \right)\ge \left(\binom{k}{j}-1\right)tp(1-\eps/4).$$
Furthermore, we know from Lemma~\ref{lem:maxdeg} that $\Delta_{\ell}(G_j(t))\le C\alpha n^{j-\ell}$ a.a.s..  Since every 
$k$-subset of $[n]$ which contains exactly one $j$-set which is an edge of $G_j(t)$ and $\binom{k}{j}-1$ not in $G_j(t)$ (the stack $S$ is empty)
must have been queried at this time, we infer that at time $t$ (a.a.s.) at least 
\begin{align*}
& \left(\binom{k}{j}-1\right)tp(1-\eps/4)\left(\binom{n-j}{k-j}-\sum_{\ell=0}^{j-1} \binom{j}{\ell} \Delta_{\ell}(G_j(t))\binom{n-2j+\ell}{k-2j+\ell}\right)\\
> \;\; & t(1+3\eps/5)\frac{(k-j)!}{n^{k-j}} \left(\binom{n-j}{k-j}- 2^j C \alpha n^{j-\ell} n^{k-2j+\ell}\right)\\
> \;\; & (1+\eps/2)(1-2^j (k-j)!C\alpha)t
\end{align*}
queries were made. This is larger than $t$ if $\alpha\le \eps/(2^{j+2}(k-j)!C)$ and therefore we obtain a contradiction (since until this time 
only $t$ queries were made). Thus, between $\alpha n^k/2$ and $\alpha n^k$ the stack remains nonempty, which again implies by Lemma~\ref{lem:strongconcentration} 
that at least $\alpha p n^k/2 - 2\alpha^2 n^j$ edges are in some $j$-tuple-connected component, which therefore contains at least
$$\left(\binom{k}{j}-1\right)\left(\alpha p n^k/2 - 2\alpha^2 n^j \right) = \Omega(\eps n^j)$$
$j$-sets. This completes the proof of Theorem~\ref{thm:pair_conn}

\begin{remark}\label{rem:walk}
As we did for vertex-connectivity, we could modify our calculations to prove that with high probability the set of active $j$-sets does not become small between times $\alpha n^k/2$ and $\alpha n^k$. However, for $j>1$ the set of active $j$-sets does \emph{not} automatically form a $j$-tight path since they could, for example, all contain one vertex. We would however obtain a long $j$-tight walk which is non-repeating in the sense that a $j$-set is only visited once in the walk.
\end{remark}

\begin{remark}
The most difficult part of the proof, the bounded degree lemma (Lemma~\ref{lem:maxdeg}), explicitly allowed the search process to be a breadth-first search rather than a depth-first search. In fact, the rest of the proof would also work equally well for a breadth-first search. The only point at which we actually need a depth-first search process is in Remark~\ref{rem:vertexpath} and Remark~\ref{rem:walk}, where we note that the set of active vertices forms either a path or a $j$-tight walk. The breadth-first search algorithm is used in~\cite{CoKaKo14}.
\end{remark}

\section{Concluding remarks}\label{sec:conclusion}
For $p=(1+\eps)p_{k,j}$, a natural conjecture is that a \emph{unique} largest component of size $\Omega (\eps n^j)$ should exist with high probability for any 
$\eps$ such that $\eps^3 n^j\rightarrow \infty$. In this paper, we have the additional condition that $\eps \gg n^{\delta-1}$ (for some $\delta >0$). For $j=1,2$, this condition is already 
implied by $\eps^3 n^j \rightarrow \infty$, so in these cases our range of $\eps$ is best possible. However, once $j\ge 3$, the condition $\eps \gg n^{\delta-1}$ takes over.

The extra condition arises because of our proof method; in the bounded degree lemma, we wish to show that degrees which we expect to have size $\Theta (\eps n^{j-\ell})$ do not exceed 
their expected size by more than a constant factor (a.a.s.). For this to be plausible, we certainly need $\Theta (\eps n^{j-\ell})$ to be large, which for $\ell = j-1$ leads to the extra condition on $\eps$. If one were to attempt to remove this condition while still using this proof method, presumably some information about the distribution of degrees (which may now be small) would be required.

We have shown here, that the largest component has size $\Omega (\eps n^j)$, which for constant $\eps$ is certainly the correct order of magnitude. In \cite{CoKaKo14}, the asymptotic size of the largest component is determined and its uniqueness (i.e. that all other components are much smaller) proven, although the range of $\eps$ is slightly more restrictive than that allowed here. The argument in that paper makes fundamental use of the bounded degree lemma from this paper. Independently Lu and Peng~\cite{LP14} also claim to have proved the asymptotic size and uniqueness of the largest component, though only for constant $\eps$.

It would also be interesting to know about the structure of the components and in particular whether there is a simple generalisation of the well-known fact that for graphs all small components (i.e. any except the giant component, if it exists) are either trees or unicyclic graphs a.a.s.. For the case $j=1$, results in this direction were obtained in~\cite{KarLuc02,RR06}.

Finally, one could also study the emergence of the \emph{$s$-cores} of a random hypergraph: For $1\leq \ell <k$ we have defined the degree of a set of $\ell$ vertices, and so we have a well-defined notion of minimum $\ell$-degree. We can therefore ask when a.a.s.\ there exists a non-empty subhypergraph of $H^k(n,p)$ with minimum $\ell$-degree at least $s$, which is called the $s$-core. This has already been studied in the case $\ell=1$ by Molloy~\cite{Mo05}, but for other values of $\ell$ this question remains wide open.

\bibliographystyle{amsplain}
\bibliography{CKP14Bibliography}
\end{document}